\documentclass[12pt]{amsart}
\usepackage{amsmath}
\usepackage{amssymb,float}
\usepackage[colorlinks=true]{hyperref}
\usepackage[all]{xy}
\usepackage[toc,page,title,titletoc,header]{appendix}
\usepackage{xcolor}
\usepackage{tikz-cd}
\usepackage{graphicx}
\usepackage{epigraph}
\usepackage{bbm}

\begin{document}
	\pdfoutput=1
	\theoremstyle{plain}
	\newtheorem{thm}{Theorem}[section]
	\newtheorem*{thm1}{Theorem 1}
	
	\newtheorem*{thmM}{Main Theorem}
	\newtheorem*{thmA}{Theorem A}
	\newtheorem*{thm2}{Theorem 2}
	\newtheorem{lemma}[thm]{Lemma}
	\newtheorem{lem}[thm]{Lemma}
	\newtheorem{cor}[thm]{Corollary}
	\newtheorem{pro}[thm]{Proposition}
	\newtheorem{prop}[thm]{Proposition}
	\newtheorem{variant}[thm]{Variant}
	\theoremstyle{definition}
	\newtheorem{notations}[thm]{Notations}
	\newtheorem{rem}[thm]{Remark}
	\newtheorem{rmk}[thm]{Remark}
	\newtheorem{rmks}[thm]{Remarks}
	\newtheorem{defi}[thm]{Definition}
	\newtheorem{exe}[thm]{Example}
	\newtheorem{claim}[thm]{Claim}
	\newtheorem{ass}[thm]{Assumption}
	\newtheorem{prodefi}[thm]{Proposition-Definition}
	\newtheorem{que}[thm]{Question}
	\newtheorem{con}[thm]{Conjecture}
	
	\newtheorem{exa}[thm]{Example}
	\newtheorem*{assa}{Assumption A}
	\newtheorem*{algstate}{Algebraic form of Theorem \ref{thmstattrainv}}
	
	\newtheorem*{dmlcon}{Dynamical Mordell-Lang Conjecture}
	\newtheorem*{condml}{Dynamical Mordell-Lang Conjecture}
	\newtheorem*{congb}{Geometric Bogomolov Conjecture}
	\newtheorem*{congdaocurve}{Dynamical Andr\'e-Oort Conjecture for curves}
	
	\newtheorem*{pdd}{P(d)}
	\newtheorem*{bfd}{BF(d)}

	\newtheorem*{probreal}{Realization problems}
	\numberwithin{equation}{section}
	\newcounter{elno}                
	\def\points{\list
		{\hss\llap{\upshape{(\roman{elno})}}}{\usecounter{elno}}}
	\let\endpoints=\endlist
	\newcommand{\SH}{\rm SH}
	\newcommand{\Cov}{\rm Cov}
	\newcommand{\Tan}{\rm Tan}
	\newcommand{\res}{\rm res}
	\newcommand{\Om}{\Omega}
	\newcommand{\om}{\omega}
	\newcommand{\La}{\Lambda}
	\newcommand{\la}{\lambda}
	\newcommand{\mc}{\mathcal}
	\newcommand{\mb}{\mathbb}
	\newcommand{\surj}{\twoheadrightarrow}
	\newcommand{\inj}{\hookrightarrow}
	\newcommand{\zar}{{\rm zar}}
	\newcommand{\Exc}{{\rm Exc}}
	\newcommand{\Mod}{{\rm Mod}}
	\newcommand{\an}{{\rm an}}
	\newcommand{\red}{{\rm red}}
	\newcommand{\codim}{{\rm codim}}
	\newcommand{\Supp}{{\rm Supp\;}}
	\newcommand{\Leb}{{\rm Leb}}
	\newcommand{\rank}{{\rm rank}}
	\newcommand{\Lip}{{\rm Lip}}
	\newcommand{\Ker}{{\rm Ker \ }}
	\newcommand{\Pic}{{\rm Pic}}
	\newcommand{\Der}{{\rm Der}}
	\newcommand{\Div}{{\rm Div}}
	\newcommand{\Hom}{{\rm Hom}}
	\newcommand{\Corr}{{\rm Corr}}
	\newcommand{\im}{{\rm im}}
	\newcommand{\Spec}{{\rm Spec \,}}
	\newcommand{\Nef}{{\rm Nef \,}}
	\newcommand{\Frac}{{\rm Frac \,}}
	\newcommand{\Sing}{{\rm Sing}}
	\newcommand{\sing}{{\rm sing}}
	\newcommand{\reg}{{\rm reg}}
	\newcommand{\Char}{{\rm char\,}}
	\newcommand{\Tr}{{\rm Tr}}
	\newcommand{\ord}{{\rm ord}}
	\newcommand{\bif}{{\rm bif}}
	\newcommand{\AS}{{\rm AS}}
	\newcommand{\FS}{{\rm FS}}
	\newcommand{\CE}{{\rm CE}}
	\newcommand{\PCE}{{\rm PCE}}
	\newcommand{\WR}{{\rm WR}}
	\newcommand{\PR}{{\rm PR}}
	\newcommand{\TCE}{{\rm TCE}}
	\newcommand{\diam}{{\rm diam\,}}
	\newcommand{\id}{{\rm id}}
	\newcommand{\NE}{{\rm NE}}
	\newcommand{\Gal}{{\rm Gal}}
	\newcommand{\Min}{{\rm Min \ }}
	\newcommand{\Hol}{{\rm Hol \ }}
	\newcommand{\Rat}{{\rm Rat}}
	\newcommand{\Hdim}{{\rm Hdim}}
	\newcommand{\dist}{{\rm dist}}
	\newcommand{\FL}{{\rm FL}}
	\newcommand{\fm}{{\rm fm}}
	\newcommand{\vol}{{\rm vol}}
	
	\newcommand{\Max}{{\rm Max \ }}
	\newcommand{\Alb}{{\rm Alb}\,}
	\newcommand{\Aff}{{\rm Aff}\,}
	\newcommand{\GL}{{\rm GL}\,}        
	\newcommand{\PGL}{{\rm PGL}\,}
	\newcommand{\Bir}{{\rm Bir}}
	\newcommand{\Bif}{{\rm Bif}}
	\newcommand{\Aut}{{\rm Aut}}
	\newcommand{\topo}{{\rm top}}
	\newcommand{\End}{{\rm End}}
	\newcommand{\Per}{{\rm Per}\,}
	\newcommand{\Preper}{{\rm Preper}\,}
	\newcommand{\Preim}{{\rm Preim}\,}
	\newcommand{\ie}{{\it i.e.\/},\ }
	\newcommand{\niso}{\not\cong}
	\newcommand{\nin}{\not\in}
	\newcommand{\soplus}[1]{\stackrel{#1}{\oplus}}
	\newcommand{\by}[1]{\stackrel{#1}{\rightarrow}}
	\newcommand{\longby}[1]{\stackrel{#1}{\longrightarrow}}
	\newcommand{\vlongby}[1]{\stackrel{#1}{\mbox{\large{$\longrightarrow$}}}}
	\newcommand{\ldownarrow}{\mbox{\Large{\Large{$\downarrow$}}}}
	\newcommand{\lsearrow}{\mbox{\Large{$\searrow$}}}
	\renewcommand{\d}{\stackrel{\mbox{\scriptsize{$\bullet$}}}{}}
	\newcommand{\dlog}{{\rm dlog}\,}    
	\newcommand{\longto}{\longrightarrow}
	\newcommand{\vlongto}{\mbox{{\Large{$\longto$}}}}
	\newcommand{\limdir}[1]{{\displaystyle{\mathop{\rm lim}_{\buildrel\longrightarrow\over{#1}}}}\,}
	\newcommand{\liminv}[1]{{\displaystyle{\mathop{\rm lim}_{\buildrel\longleftarrow\over{#1}}}}\,}
	\newcommand{\norm}[1]{\mbox{$\parallel{#1}\parallel$}}
	\newcommand{\boxtensor}{{\Box\kern-9.03pt\raise1.42pt\hbox{$\times$}}}
	\newcommand{\into}{\hookrightarrow}
	\newcommand{\image}{{\rm image}\,}
	\newcommand{\Lie}{{\rm Lie}\,}      
	\newcommand{\CM}{\rm CM}
	\newcommand{\Ma}{\mathbf{M}}
	\newcommand{\Teich}{\rm Teich\;}
	\newcommand{\genus}{{\rm genus}}
	\newcommand{\gonality}{{\rm gonal}}
	\newcommand{\sext}{\mbox{${\mathcal E}xt\,$}}  
	\newcommand{\shom}{\mbox{${\mathcal H}om\,$}}  
	\newcommand{\coker}{{\rm coker}\,}  
	\newcommand{\sm}{{\rm sm}}
	\newcommand{\pgcd}{\text{pgcd}}
	\newcommand{\trd}{\text{tr.d.}}
	\newcommand{\tensor}{\otimes}
	\newcommand{\hotimes}{\hat{\otimes}}
	
	\newcommand{\CH}{{\rm CH}}
	\newcommand{\tr}{{\rm tr}}
	\newcommand{\e}{\rm SH}
	
	\renewcommand{\iff}{\mbox{ $\Longleftrightarrow$ }}
	\newcommand{\supp}{{\rm supp}\,}
	\newcommand{\esssup}{{\rm ess\,sup}}
	\newcommand{\ext}[1]{\stackrel{#1}{\wedge}}
	\newcommand{\onto}{\mbox{$\,\>>>\hspace{-.5cm}\to\hspace{.15cm}$}}
	\newcommand{\propsubset}
	{\mbox{$\textstyle{
				\subseteq_{\kern-5pt\raise-1pt\hbox{\mbox{\tiny{$/$}}}}}$}}
	\newcommand{\sA}{{\mathcal A}}
	\newcommand{\sB}{{\mathcal B}}
	\newcommand{\sC}{{\mathcal C}}
	\newcommand{\sD}{{\mathcal D}}
	\newcommand{\sE}{{\mathcal E}}
	\newcommand{\sF}{{\mathcal F}}
	\newcommand{\sG}{{\mathcal G}}
	\newcommand{\sH}{{\mathcal H}}
	\newcommand{\sI}{{\mathcal I}}
	\newcommand{\sJ}{{\mathcal J}}
	\newcommand{\sK}{{\mathcal K}}
	\newcommand{\sL}{{\mathcal L}}
	\newcommand{\sM}{{\mathcal M}}
	\newcommand{\sN}{{\mathcal N}}
	\newcommand{\sO}{{\mathcal O}}
	\newcommand{\sP}{{\mathcal P}}
	\newcommand{\sQ}{{\mathcal Q}}
	\newcommand{\sR}{{\mathcal R}}
	\newcommand{\sS}{{\mathcal S}}
	\newcommand{\sT}{{\mathcal T}}
	\newcommand{\sU}{{\mathcal U}}
	\newcommand{\sV}{{\mathcal V}}
	\newcommand{\sW}{{\mathcal W}}
	\newcommand{\sX}{{\mathcal X}}
	\newcommand{\sY}{{\mathcal Y}}
	\newcommand{\sZ}{{\mathcal Z}}
	\newcommand{\A}{{\mathbb A}}
	\newcommand{\B}{{\mathbb B}}
	\newcommand{\C}{{\mathbb C}}
	\newcommand{\D}{{\mathbb D}}
	\newcommand{\E}{{\mathbb E}}
	\newcommand{\F}{{\mathbb F}}
	\newcommand{\G}{{\mathbb G}}
	\newcommand{\HH}{{\mathbb H}}
	\newcommand{\LL}{{\mathbb L}}
	\newcommand{\J}{{\mathbb J}}
	\newcommand{\M}{{\mathbb M}}
	\newcommand{\N}{{\mathbb N}}
	\renewcommand{\P}{{\mathbb P}}
	\newcommand{\Q}{{\mathbb Q}}
	\newcommand{\R}{{\mathbb R}}
	\newcommand{\T}{{\mathbb T}}
	\newcommand{\U}{{\mathbb U}}
	\newcommand{\V}{{\mathbb V}}
	\newcommand{\W}{{\mathbb W}}
	\newcommand{\X}{{\mathbb X}}
	\newcommand{\Y}{{\mathbb Y}}
	\newcommand{\Z}{{\mathbb Z}}
	\newcommand{\ch}{{\mathbbm {1}}}
	\newcommand{\bk}{{\mathbf{k}}}
	
	\newcommand{\bp}{{\mathbf{p}}}
	\newcommand{\ep}{\varepsilon}
	\newcommand{\bbk}{{\overline{\mathbf{k}}}}
	\newcommand{\Fix}{\mathrm{Fix}}
	
	\newcommand{\tor}{{\mathrm{tor}}}
	\renewcommand{\div}{{\mathrm{div}}}
	
	\newcommand{\trdeg}{{\mathrm{trdeg}}}
	\newcommand{\Stab}{{\mathrm{Stab}}}
	
	\newcommand{\OK}{{\overline{K}}}
	\newcommand{\ok}{{\overline{k}}}
	
	\newcommand{\cf}{{\color{red} [c.f. ?]}}
	\newcommand{\jy}{\color{red} jy:}

	\title[]{Equidistribution speed of  iterated preimages for rational maps on the Riemann sphere}

	\author{Mai Hao}

	
	\address{Institute for Theoretical Sciences, Westlake University, Hangzhou 310030, China}
	
	\email{haomai@westlake.edu.cn}

	\author{Zhuchao Ji}
	
	\address{Institute for Theoretical Sciences, Westlake University, Hangzhou 310030, China}
	
	\email{jizhuchao@westlake.edu.cn}

	
	\date{\today}

	\bibliographystyle{alpha}
	
	\maketitle
	
	\begin{abstract}
		The exponential equidistribution speed of iterated preimages for holomorphic endomorphisms on $\P^k$  was established by Drasin--Okuyama for  $k=1$, and by Dinh--Sibony for arbitrary $k$. 
		In this paper,  we obtain a near-optimal equidistribution speed  with order $O(nd^{-n})$ in dimension one  for points that are not super-attracting periodic. 
		Moreover, the equidistribution speed order $O(nd^{-n})$ holds not only for $\sC^2$ observables but also for  H\"older continuous  d.s.h. observables. 
		For geometrically finite rational maps (including all hyperbolic rational maps), we prove that the equidistribution speed order  is $O(d^{-n})$ for $\sC^2$ observables and points that are not super-attracting, attracting, or parabolic  periodic.
	\end{abstract}
	\tableofcontents

	\section{Introduction}
	
	Let $f$ be a rational map on the Riemann sphere $\P^1$ of degree $d\geq 2$.  
	The celebrated equidistribution theorem of Brolin \cite{MR194595}, Lyubich \cite{MR741393},  and Freire-Lopez-Ma\~n\'e \cite{MR736568}  asserts that the preimages $d^{-n}(f^n)^*(\delta_a)$ of $a\in\P^1$ converges in the weak-* sense  to the unique maximal entropy measure $\mu_f$, provided that $a$ is not in the exceptional set $\sE_f$.  Here $\sE_f$ contains all points that have a finite backward orbit. The cardinality of $\sE_f$ is at most 2. 
	The aim of this paper is to give an equidistribution speed for iterated preimages of $a$. 
	
	Drasin--Okuyama \cite{MR2346941} and Dinh--Sibony \cite{MR2640045} have established the following exponential equidistribution speed:

	\begin{thm}\label{thm:dods}
		Let $f$ be a rational map on $\P^1$ of degree $d\geq 2$.  Let $a\in \P^1$ which is not a super-attracting periodic point.  Then for any $1<\la<d$, there exists a constant $C=C(f,a,\la)>0$ such that for any $\sC^2$ observable $\phi:\P^1\to \R$, 
		$$|\langle d^{-n}(f^n)^*(\delta_a)-\mu_f, \phi\rangle|\leq C\la^{-n}||\phi||_{\sC^2}.$$
	\end{thm}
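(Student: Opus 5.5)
We reduce the statement to an estimate on the potential of $\mu_f$ and then control the pullbacks of a Green function. Fix a smooth probability form $\omega$ on $\P^1$ (Fubini--Study). Write $d^{-1}f^*\omega=\omega+dd^c u$ with $u$ smooth. The Green function $g=\sum_{j\ge0}d^{-j}u\circ f^j$ is Hölder continuous and satisfies $\mu_f=\omega+dd^c g$. Next write $\delta_a=\omega+dd^c G_a$, where $G_a(z)=\log\operatorname{dist}(z,a)+O(1)$ is a quasi-potential. Pulling back $n$ times gives
\[
d^{-n}(f^n)^*\delta_a-\mu_f=dd^c\big(d^{-n}G_a\circ f^n-d^{-n}g\circ f^n\big),
\]
using $d^{-n}(f^n)^*\omega=\omega+dd^c\sum_{j<n}d^{-j}u\circ f^j$ and $g-\sum_{j<n}d^{-j}u\circ f^j=d^{-n}g\circ f^n$. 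Pairing with $\phi$ and integrating by parts, we get
\[
|\langle d^{-n}(f^n)^*\delta_a-\mu_f,\phi\rangle|\le \|\phi\|_{\sC^2}\, d^{-n}\Big(\int |G_a\circ f^n|\,\omega+\|g\|_\infty\Big).
\]
So it suffices to show $\int_{\P^1}|\log\operatorname{dist}(f^n(z),a)|\,\omega(z)\le C\la'^{\,n}$ for every $\la'>1$. Dividing by $d^n$ then gives $(\la'/d)^n$, i.e.\ $\la^{-n}$ with $\la=d/\la'$ arbitrary in $(1,d)$.

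The key step is this integral bound. It equals $\int |G_a|\,(f^n)_*\omega$, so we must show that the push-forwards of Lebesgue measure do not charge small balls around $a$ too much. Equivalently, the function $z\mapsto\log\operatorname{dist}(f^n z,a)$ has $L^1$ norm growing subexponentially.

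Using $f^n(z)$ near $a$ iff $z$ is near $(f^n)^{-1}(a)$, we estimate
\[
\operatorname{area}\{z:\operatorname{dist}(f^n z,a)<r\}\le\sum_{b\in f^{-n}(a)}\operatorname{area}(\text{component of }f^{-n}B(a,r)\text{ at }b).
\]
Near each preimage $b$ with local degree $m_b$, the component has diameter roughly $(r/|\text{Taylor coefficient}|)^{1/m_b}$. The danger is large multiplicities, which arise when backward orbits of $a$ pass repeatedly through critical points. If $a$ is not super-attracting periodic, the multiplicity $\deg_b f^n$ grows at most subexponentially in the sense of Lyubich / Favre--Jonsson: $\max_b\deg_b(f^n)\le C_\ep e^{\ep n}$, because a backward orbit can meet a critical point at most once per cycle unless it sits in a critical cycle through $a$.

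With this multiplicity control, a Łojasiewicz-type estimate $\operatorname{dist}(f^n z,a)\ge c_n\operatorname{dist}(z,f^{-n}a)^{M_n}$ gives the integrand bound $|\log\operatorname{dist}(f^nz,a)|\lesssim M_n|\log\operatorname{dist}(z,f^{-n}a)|+|\log c_n|$. Here $M_n$ is subexponential and $\log c_n=O(n)$ by the Lipschitz/derivative bounds on $f$. Integrating over $z$ (the log singularities being integrable uniformly, with $\int\sum_b\le d^n$ weighted by $d^{-n}$ measure-wise), we get the required subexponential bound.

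The main obstacle is exactly the multiplicity bound. The first approach would be the Drasin--Okuyama strategy: split $f^{-n}(a)$ according to whether the backward branch passes near critical orbits. An alternative is Dinh--Sibony's method of showing that $\{d^{-n}G_a\circ f^n\}$ satisfy a uniform exponential estimate via the Lelong number $\nu(f^{n*}\delta_a,z)=\deg_z f^n$, which is $o(\la'^n)$ under the hypothesis.
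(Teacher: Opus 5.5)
\textbf{There is a genuine gap: the key \L{}ojasiewicz inequality is asserted, not proved.}

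Your reduction is correct. The identity $d^{-n}(f^n)^*\delta_a-\mu_f=dd^c\big(d^{-n}(G_a-g)\circ f^n\big)$ and the integration by parts reduce the theorem to showing $I_n:=\int_{\P^1}\log\frac{1}{\mathrm{dist}(f^n z,a)}\,\omega(z)=O(\la'^n)$ for every $\la'>1$. But this bound is the whole difficulty. You rest it on the inequality $\mathrm{dist}(f^nz,a)\geq c_n\,\mathrm{dist}(z,f^{-n}(a))^{M_n}$, and the reasons you give do not imply it.
\begin{itemize}
\item Lipschitz/derivative bounds give $\mathrm{dist}(f^nz,a)\leq \mathrm{Lip}(f)^n\,\mathrm{dist}(z,f^{-n}(a))$, which is the opposite direction.
\item The local degree $\deg_b f^n$ gives a \L{}ojasiewicz inequality only on a neighbourhood of $b$ whose size is not controlled in $n$. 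Globally you must handle points $z$ whose orbits pass extremely close to critical points (possibly recurrent ones in $\sJ_f$) without hitting them. There $|(f^n)'(z)|$ can be super-exponentially small.
\item Naively composing the one-step inequality $\mathrm{dist}(f(z),y)\geq c\,\mathrm{dist}(z,f^{-1}(y))^D$ produces the useless exponent $D^n$.
\end{itemize}
The multiplicity bound you call the main obstacle is in fact trivial. A critical point occurring twice in $b,\dots,f^{n-1}(b)$ would be periodic, with $a$ in its super-attracting cycle. Hence $\deg_b f^n\leq C_0:=\prod_{c\in\sC'_f}\deg_c f$. Bounded multiplicity at the exact preimages says nothing about $c_n$. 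The Lelong-number variant you mention has the same defect, since Lelong numbers only record these multiplicities.

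\textbf{What would close the gap.} You need a bound on the diameters of the components of $f^{-n}(B(a,r))$, which is equivalent to your inequality. This is the paper's Proposition \ref{prop:diameter}: $\diam V\leq L^n(\diam B)^{\rho_a}$ with $\rho_a=\rho/(1+\log\frac{1}{d(a,\sS_f)})$. It is proved from the Denker--Przytycki--Urba\'nski summability $\sum_i -\log d(f^i(x),\sC'_f)\leq Qn$, with at most $N$ terms removed (Proposition \ref{prop:dpu}), together with B\"ottcher coordinates near $\sS_f$. The paper obtains the statement from this via Theorem \ref{thm:main} and Corollary \ref{cor:general}. Inserted into your scheme, it gives $M_n=1/\rho_a$ and $\log(1/c_n)=O(n)$, and even the rate $O(nd^{-n})$.

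For the weaker rate $\la^{-n}$, a Dinh--Sibony type blocking argument would also do. Take a uniform \L{}ojasiewicz inequality for a fixed iterate $f^m$ over a compact set $K_a$ avoiding $\sS_f$, with exponent $C_0$ independent of $m$. Iterate it about $n/m$ times along backward orbits of $a$; you must check these orbits stay in $K_a$. This gives $I_n=O(n\,C_0^{n/m})$, and $C_0^{1/m}<\la'$ for $m$ large. One of these two arguments must be supplied.

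\textbf{A second, fixable error in the final integration.} For $E=f^{-n}(a)$ you must use $\int\log\frac{1}{\mathrm{dist}(z,E)}\,\omega\leq\frac12\log|E|+O(1)=O(n)$, which follows from $\omega\{\mathrm{dist}(z,E)<s\}\lesssim |E|s^2$. Summing the singularities over $b\in E$ instead gives $O(d^n)$ and destroys the decay, since no extra $d^{-n}$ factor remains at that stage.
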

	
	To obtain equidistribution speed with order $\la^{-n}$ for any $1<\la<d$, the assumption that $a$  is not a  super-attracting periodic  point cannot be removed.   
	Dinh--Sibony \cite{MR2640045}  also obtained the exponential equidistribution speed of iterated preimages for holomorphic endomorphisms on $\P^k$ with $k\geq 1$.  
	The methods of Dinh--Sibony and Drasin--Okuyama are different.  Dinh--Sibony's method is more geometric, which is based on potential theory and estimate of the diameter of the pull-back of a ball.  Drasin--Okuyama's proof relies on Nevanlinna theory, which is more analytic.   
	
	\medskip

	In this paper we give a sharper equidistribution speed than Theorem \ref{thm:dods} not only for $\sC^2$ observables, but also for a class of {\bf H\"older continuous observables}, namely for H\"older continuous  d.s.h. observables.  A function on $\P^1$ is called {\em quasi-subharmonic} if it is locally the difference of a subharmonic function and a smooth function.   A function is called
	{\em d.s.h.} if it is equal outside a polar set to the difference of two quasi-subharmonic functions.  We identify two d.s.h. functions if they are equal out of a polar set.  If $\phi$ is a d.s.h. function, $dd^c \phi$ can be written as the difference of two finite positive measures with the same mass. Define $||dd^c\phi||$ to be the minimum of the mass of $\mu^++\mu^-$, where the minimum is taken over all finite positive measures $\mu^\pm$ such that $dd^c\phi=\mu^+-\mu^-$.
	
	Throughout the paper, the distances are computed with respect to the spherical metric,  normalized such that $\diam \P^1=1$.  
	For a rational map $f$, we define $\sS_f$ to be  the set containing  all  super-attracting periodic  point.
	
	\begin{thm}\label{thm:main}
		Let $f$ be a rational map on $\P^1$ of degree $d\geq 2$. Then  there exists a constant $C=C(f)>0$  such that for  $a\in \P^1$  not a super-attracting periodic  point, and for any $\alpha$-H\"older  d.s.h. observable $\phi:\P^1\to \R$, $0<\alpha\leq 1$, we have
		$$|\langle d^{-n}(f^n)^*(\delta_a)-\mu_f, \phi\rangle|\leq C\left[1+\log \frac{1}{d(a, \sS_f)}\right] nd^{-n}(||\phi||_\alpha+||dd^c\phi||).$$
		Here   $||\phi||_\alpha:=\sup_{x,y\in \P^1, x\neq y} \frac{|\phi(x)-\phi(y)|}{d(x,y)^\alpha}$ is the H\"older constant of $\phi$.
	\end{thm}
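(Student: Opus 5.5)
The plan is to use the standard potential-theoretic reduction and push the estimate one level further than the proof of Theorem \ref{thm:dods}. Write $\delta_a-\omega = dd^c u_a$, where $\omega$ is the Fubini–Study form (probability measure) and $u_a(z)=\log d(z,a)+c$ is a quasi-potential with $\sup u_a\le 0$ after normalization. Let $g$ be the Green function of $f$, so that $d^{-1}f^*\omega-\omega=dd^c g_0$ with $g_0$ smooth, and $\mu_f-\omega=dd^c g$ with $g=\sum_{j\ge 0}d^{-j}g_0\circ f^j$ Hölder continuous. Then
$$d^{-n}(f^n)^*\delta_a-\mu_f = dd^c\Big(d^{-n}u_a\circ f^n - \sum_{j\ge n} d^{-j} g_0\circ f^j\Big).$$
Pairing with $\phi$ and integrating by parts gives
$$\langle d^{-n}(f^n)^*\delta_a-\mu_f,\phi\rangle=\langle d^{-n}u_a\circ f^n,dd^c\phi\rangle+O(d^{-n}\|g_0\|_\infty\|dd^c\phi\|).$$
The tail is harmless. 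The whole problem is to bound $d^{-n}\int u_a\circ f^n\, d|dd^c\phi|$, where $|dd^c\phi|$ is an arbitrary positive measure. We cannot simply use $\int |u_a\circ f^n|$ against such a measure, since $u_a\circ f^n$ is $-\infty$ at the preimages of $a$.

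Step 1 therefore smooths. Replace $u_a$ by $u_{a,r}=\max(u_a,\log r)$ with $r=r_n$ to be chosen. Then $dd^c u_{a,r}=\nu_r-\omega$, where $\nu_r$ is the normalized arc measure on the circle $\partial B(a,r)$. Repeating the computation with $\delta_a$ replaced by $\nu_r$ gives
$$|\langle d^{-n}(f^n)^*\nu_r-\mu_f,\phi\rangle|\le d^{-n}(\log\tfrac1r+C)\|dd^c\phi\|,$$
because now $|u_{a,r}|\le \log(1/r)+C$ everywhere.

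Step 2 compares $d^{-n}(f^n)^*\delta_a$ with $d^{-n}(f^n)^*\nu_r$ using the Hölder norm. Each of the $d^n$ preimages $b$ of $a$ (with multiplicity) is matched with the mass of $(f^n)^*\nu_r$ lying in the component of $f^{-n}(B(a,r))$ containing $b$. The error is bounded by
$$\|\phi\|_\alpha\, d^{-n}\sum_{\text{components }W}\deg(f^n|_W)\,(\diam W)^\alpha.$$
So we need the key shrinking lemma: the weighted average of $(\diam W)^\alpha$ is $O(d^{-n})$ once $r=e^{-Kn}$ for a suitable $K$. This also gives $\log(1/r)\approx Kn$, which produces the factor $nd^{-n}$ in Step 1.

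Step 3, the main obstacle, proves this lemma with the correct dependence on $d(a,\sS_f)$. I would use the Dinh–Sibony approach, as in the proof of Theorem \ref{thm:dods}: bound the diameter of pull-backs of small discs via a Lojasiewicz/Koebe-type estimate. Components not meeting critical orbits shrink like $r\cdot\mathrm{Lip}^{n}$ backward only in the bad direction. In the forward direction, $f$ is Lipschitz with constant $L$, so $\diam f^{-n}$ of a univalent piece can only be large if $r\gtrsim L^{-n}\diam W$. Choosing $K>\log L+\log d/\alpha$ then makes typical components of diameter $\le d^{-n/\alpha}$.

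The delicate components are those where $f^n|_W$ has high degree, meaning the disc passes near critical values many times. A pull-back of $B(a,r)$ through a critical point of local degree $m$ has diameter about $r^{1/m}$. Chaining $n$ steps, the total degree $\deg(f^n|_W)$ stays bounded by a constant times a power depending on the multiplicity of critical points along nearby periodic cycles. High degree accumulates only when $a$ lies close to a super-attracting cycle, where the orbit can visit critical points every period. I would quantify this as follows. The number of steps at which the backward branch meets a critical point with nontrivial degree is at most $O(1+\log(1/d(a,\sS_f)))$ plus a proportion controlled by the non-super-attracting critical recurrence. This is the step I expect to be hardest. There I would adapt the counting in Drasin–Okuyama/Dinh–Sibony, which shows that the multiplicity of $a$'s preimages is $\le C\lambda^{n}$-small on average, to obtain the averaged bound
$$d^{-n}\sum_W\deg(f^n|_W)(\diam W)^\alpha\le C\bigl(1+\log\tfrac1{d(a,\sS_f)}\bigr)\,n\,d^{-n}.$$
Combining Steps 1–3 then gives the theorem.
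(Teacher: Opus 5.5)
Your Steps 1 and 2 are sound and amount to the paper's reduction. The paper truncates the $\mu_f$-quasi-potential $g_a$ at level $-M_n$ rather than the Fubini--Study potential, which makes no difference. So everything rests on Step 3, and there the proposal has a genuine gap.

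First, the Lipschitz argument runs the wrong way. $\diam f^n(W)\le L^n\diam W$ gives the lower bound $\diam W\ge L^{-n}\diam f^n(W)$ and says nothing about how large $W$ can be. An upper bound needs a lower bound on $|(f^n)'|$ along the orbit, i.e.\ control of how closely and how often orbits approach critical points. Even then it can only be H\"older in $r$, since passing near a critical point of local degree $m$ turns size $r$ into $r^{1/m}$.

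Second, the claim that $\deg(f^n|_W)$ stays bounded away from $\sS_f$ is asserted without proof, and bounded degree would not bound $\diam W$ anyway. Deferring to an adaptation of the Drasin--Okuyama/Dinh--Sibony counting is not an argument, and you give no mechanism by which it would beat the rate $\lambda^{-n}$, $\lambda<d$, that those methods give.

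Third, if $K$ does not depend on $a$, as your choice $K>\log L+\frac{\log d}{\alpha}$ suggests, your target bound is false. Take $f(z)=z^d$, $r_n=e^{-Kn}$ and $d(a,0)=r_n/2$. Then $B(a,r_n)$ contains the super-attracting point $0$. So $f^{-n}(B(a,r_n))$ is a single component of degree $d^n$, containing a disc $\{|z|<(cr_n)^{d^{-n}}\}$ whose radius tends to $1$. The weighted average is therefore of order one, while $C(1+\log\frac{1}{d(a,\sS_f)})nd^{-n}\approx CKn^2d^{-n}\to 0$.

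The missing ingredient is a uniform H\"older-type shrinking estimate, which the paper takes from Denker--Przytycki--Urba\'nski. If $\sum_i\log\frac{1}{d(f^i(x),\sC_f)}\le Qn$, with the sum over $0\le i\le n$ and boundedly many terms removed, then every component $V$ of $f^{-n}(B)$ containing $x$ satisfies $\diam V\le L^n(\diam B)^{\rho}$.

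This summability holds for every orbit avoiding a fixed forward-invariant neighbourhood of $\sS_f$. Denker--Przytycki--Urba\'nski prove it for critical points in $\sJ_f$. A non-periodic Fatou critical point is non-recurrent, so an orbit enters a small ball around it at most once.

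Inside that neighbourhood one uses B\"ottcher coordinates. The final stretch of the orbit there has length $m$ with $l^m\lesssim 1+\log\frac{1}{d(a,\sS_f)}$, where $l>1$ is determined by the local degrees of the super-attracting cycles. Pulling back along this stretch only costs the exponent $l^{-m}$. This gives $\diam V\le L^n(\diam B)^{\rho/(1+\log\frac{1}{d(a,\sS_f)})}$ for every component, with no averaging needed.

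You then have to take $\log\frac{1}{r_n}\asymp(1+\log\frac{1}{d(a,\sS_f)})\,n$, so that every component has diameter at most $d^{-n/\alpha}$. The H\"older comparison then costs only $d^{-n}\|\phi\|_\alpha$. The factor $(1+\log\frac{1}{d(a,\sS_f)})\,n$ enters through $\log\frac{1}{r_n}$ in your Step 1, not through Step 2.
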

	
	\medskip
	
	When $\phi$ is a $\sC^2$ function, it is automatically a d.s.h. function. Since $||\phi||_1+||dd^c\phi||$ is bounded by the $\sC^2$ norm of $\phi$, we get:
	
	\begin{cor}\label{cor:general}
		Let $f$ be a rational map on $\P^1$ of degree $d\geq 2$. Then  there exists a constant $C=C(f)>0$  such that for  $a\in \P^1$  not a super-attracting periodic  point, for any $\sC^2$ observable $\phi:\P^1\to \R$, 
		$$|\langle d^{-n}(f^n)^*(\delta_a)-\mu_f, \phi\rangle|\leq C\left[1+\log \frac{1}{d(a, \sS_f)}\right] nd^{-n}||\phi||_{\sC^2}.$$	
	\end{cor}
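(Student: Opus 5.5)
The corollary follows from Theorem \ref{thm:main} applied with $\alpha=1$; what has to be checked is that a $\sC^2$ function qualifies as a $1$-H\"older d.s.h. observable, and that $\|\phi\|_1+\|dd^c\phi\|\le C'\|\phi\|_{\sC^2}$ for a constant $C'$ depending only on the normalization of the spherical metric.

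For the H\"older bound, $\phi$ is $\sC^1$ on the compact Riemannian manifold $\P^1$. Integrating the derivative along a spherical geodesic between $x$ and $y$ gives
$$|\phi(x)-\phi(y)|\le \sup|d\phi|\, d(x,y).$$
Hence $\|\phi\|_1\le C_1\|\phi\|_{\sC^2}$. The constant $C_1$ only absorbs the choice of charts used to define the $\sC^2$ norm and the normalization $\diam\P^1=1$.

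For the d.s.h. property and the mass bound, write $dd^c\phi=h\,\omega$, where $\omega$ is the normalized Fubini--Study form and $h$ is continuous with $|h|\le C_2\|\phi\|_{\sC^2}$. Set $\mu^\pm=h^\pm\omega$, the positive and negative parts. These are finite positive measures with $dd^c\phi=\mu^+-\mu^-$, and by Stokes' theorem $\int dd^c\phi=0$, so they have the same mass. Moreover, $\phi$ is smooth, so it is itself quasi-subharmonic: locally $\phi=u-v$ with $v=-\phi$ smooth and $u=0$ subharmonic. Thus $\phi$ is d.s.h., and
$$\|dd^c\phi\|\le \int(h^++h^-)\,\omega\le C_2\|\phi\|_{\sC^2},$$
since $\omega$ has mass $1$.

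Plugging these two bounds into Theorem \ref{thm:main} with $\alpha=1$ yields the stated inequality, with the constant $C(f)$ replaced by $C(f)(C_1+C_2)$, which still depends only on $f$. There is no real obstacle here; the only point needing care is that $C_1$ and $C_2$ are universal constants, independent of $a$, $n$ and $\phi$.
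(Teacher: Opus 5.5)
Your proposal is correct and follows the paper's route: the corollary is Theorem \ref{thm:main} with $\alpha=1$, using that a $\sC^2$ function is d.s.h.\ and that $\|\phi\|_1+\|dd^c\phi\|\lesssim\|\phi\|_{\sC^2}$, which the paper simply asserts and you actually verify. The only slip is calling $\phi$ smooth when it is merely $\sC^2$; to see it is quasi-subharmonic, write it locally as $(\phi+A|z|^2)-A|z|^2$ with $A$ large, which works because $\Delta\phi$ is locally bounded.
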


	\medskip
	
	The proof of Theorem \ref{thm:main} is geometric, we estimate the diameter of the pull-back of a ball.
	
	Independently of our result,  for $\sC^2$ observables, recently Okuyama \cite{Okuyama} has obtained the same equidistribution speed $O(nd^{-n})$, but the dependence of the constant in his estimate on $a$  is inexplicit.  
	Okuyama's proof relies on Nevanlinna theory. 
	
	\medskip
	
	A rational map $f$ is called {\em geometrically finite} if  the critical points of $f$  on the Julia set $\sJ_f$ have finite orbits.  For geometrically finite maps,  we prove that the equidistribution speed is of order $O(d^{-n})$.   
	Every {\em hyperbolic} rational map (i.e. expanding on the Julia set) is  geometrically finite.
	
	\begin{thm}\label{thm:gf}
		Let $f$ be a geometrically finite rational map on $\P^1$ of degree $d\geq 2$.  
		Assume  $a\in \P^1$  is neither a super-attracting, attracting, nor parabolic periodic point, then there exists a constant $C=C(f,a)>0$  such that  for any $\sC^2$ observable $\phi:\P^1\to \R$, 
		$$|\langle d^{-n}(f^n)^*(\delta_a)-\mu_f, \phi\rangle|\leq C d^{-n}(||dd^c\phi||_\infty+||\phi||_\infty).$$
		Here as a signed measure $dd^c\phi=h\omega $, where $h$ is continuous and $\omega$ is the Fubini--Study form,  and we define $||dd^c\phi||_\infty:=||h||_\infty$.	
	\end{thm}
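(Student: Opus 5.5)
We write the error through the Green function. Let $\omega$ be the Fubini--Study form and $g$ the continuous quasi-potential of $\mu_f$, i.e. $\mu_f=\omega+dd^c g$. Set $u_a$ with $dd^c u_a=\delta_a-\omega$, normalized by $\langle\mu_f,u_a\rangle=0$; explicitly $u_a(z)=\log d(z,a)+$ a bounded smooth term. The standard telescoping argument gives
$$d^{-n}(f^n)^*\delta_a-\mu_f = dd^c\big(d^{-n}u_a\circ f^n\big)+\big(d^{-n}(f^n)^*\omega-\mu_f\big),$$
and $d^{-n}(f^n)^*\omega-\mu_f=dd^c(-d^{-n}g\circ f^n)$. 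Hence, with $v_n:=u_a-g$,
$$\langle d^{-n}(f^n)^*\delta_a-\mu_f,\phi\rangle=d^{-n}\int (v_n\circ f^n)\, dd^c\phi
= d^{-n}\int (u_a\circ f^n - g\circ f^n)\,h\,\omega .$$
Since $g$ is bounded, the $g$ term is $O(d^{-n}\|h\|_\infty)$. Also $u_a\le C$, so the whole problem is to bound $\int |u_a\circ f^n|\,\omega=\int_{\P^1}|\log d(f^n z,a)|\,\omega(z)$ uniformly in $n$, plus a term $\|\phi\|_\infty$ from the normalization constant. So I aim for
$$\sup_n \int_{\P^1} \log\frac{1}{d(f^n(z),a)}\,\omega(z)<\infty .$$

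To bound this, I would use $\int \log^+\frac1{d(f^nz,a)}\omega=\int_0^\infty \omega\big(f^{-n}(B(a,e^{-t}))\big)\,dt$. It thus suffices to show that there are $C>0$ and $\beta>0$, independent of $n$, with $\operatorname{area}\big(f^{-n}B(a,r)\big)\le C r^{\beta}$ for all small $r$. Geometric finiteness enters here. Because $a$ is not in the basin-like periodic configurations (attracting, super-attracting, or parabolic), the backward orbit of a small disk around $a$ avoids accumulating onto a non-repelling cycle in a way that concentrates area. I would split $\P^1$ into a neighbourhood $U$ of the Julia set and the Fatou set. On the Fatou set of a geometrically finite map every component is eventually attracting, superattracting, or parabolic; there are no Siegel disks or Herman rings, since their boundaries would carry recurrent critical orbits in $\sJ_f$. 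Preimages of a small ball around a point $a$ that is not one of these periodic points then lie in preperiodic pieces with controlled total area. On $\sJ_f$, geometric finiteness gives a Koebe/expansion control, with the bounded-degree branching at finitely many critical orbits, in a suitable orbifold metric. The resulting inverse branches $f^{-n}$ on $B(a,r)$ have diameters $\lesssim r^{1/N}$, where $N$ bounds the local degree, and the total area is $\lesssim r^{\beta}$ summed over components using the area pullback $\int (f^n)^*\omega = d^n$ together with the distortion.

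The main obstacle is this uniform-in-$n$ area estimate near parabolic points and postcritical points on $\sJ_f$. Near a parabolic cycle, preimages of small balls can shrink only polynomially. I would handle this with a Fatou-coordinate computation: in repelling petals, inverse branches converge to the parabolic point like $k^{-1/p}$. The resulting contributions of the form $\sum_k$ (area of the $k$-th piece) still give a summable tail, because $a$ itself is not parabolic, so $B(a,r)$ is disjoint from the cycle for small $r$. If this direct route fails, I would fall back on the cruder bound $\int|\log d(f^n z,a)|\omega\le \sum_{j<n}$ of local terms and try to show that these terms decay geometrically.
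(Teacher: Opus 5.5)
\textbf{There is a genuine gap: the uniform-in-$n$ area estimate, which is the whole theorem, is not proved.}

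Your reduction is sound and is the same one the paper uses. Since $d^{-n}(f^n)^*\delta_a-\mu_f=d^{-n}dd^c(g_a\circ f^n)$, it suffices to bound $\int|g_a\circ f^n|\,\omega$, i.e.\ $\int\log\frac{1}{d(f^n z,a)}\,\omega(z)$, uniformly in $n$. The layer-cake reduction to $\omega(f^{-n}B(a,r))\le Cr^\beta$ is also fine. But your argument for that estimate fails at the decisive step:
\begin{itemize}
\item A bound $\diam\lesssim r^{1/N}$ on each component of $f^{-n}B(a,r)$, summed over up to $d^n$ components, gives $d^n r^{2/N}$. This is not uniform in $n$.
\item The identity $\int(f^n)^*\omega=d^n$ concerns areas of images counted with multiplicity ($\int_V(f^n)^*\omega=\deg(f^n|_V)\,\omega(B)$), not the areas $\vol(V)$ of preimage components. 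It cannot supply the missing uniformity.
\item Uniform expansion in an orbifold metric is unavailable when there are parabolic cycles. The Fatou-coordinate analysis and the Fatou/Julia splitting target a non-issue, since no expansion is needed at all.
\item Your fallback of summing local terms along the orbit only gives $O(n)$, i.e.\ the $nd^{-n}$ rate of Theorem~\ref{thm:main}.
\end{itemize}

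\textbf{The missing idea is a relative bound on a fixed ball, summed using the disjointness of preimage components.} The paper does this for $a\notin\overline{\sP_f}$. On a ball $B'\ni a$ disjoint from $\overline{\sP_f}$, all inverse branches of all $f^n$ are univalent. By Koebe, each component $V$ of $f^{-n}(B)$, with $B=\frac12B'$, satisfies $\sup_V|(f^n)'|^{-2}\lesssim\vol(V)$. Since these $V$ are pairwise disjoint, $\sum_V\vol(V)\le\vol(\P^1)$, so $(f^n)_*\omega\le C\omega$ on $B$ for every $n$. This is Lemma~\ref{lem:distortion}, combined with the truncation $g_{a,M}$ and Lemma~\ref{lem:bdd}. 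The case $a\in\overline{\sP_f}$ is then reduced to this one by pulling back $N$ times.

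Your bounded-local-degree idea can be turned into the same mechanism:
\begin{itemize}
\item Since $a$ is not an attracting, super-attracting or parabolic periodic point of a geometrically finite map, there is $r_0$ with $B(a,2r_0)\cap\overline{\sP_f}\subset\{a\}$.
\item Each component of $f^{-n}(B(a,2r_0))$ is then a disk on which $f^n$ is conformally $z\mapsto z^k$. Here $k\le K$ with $K$ independent of $n$, because a critical point occurs at most once in an orbit ending at $a$ (otherwise $a$ lies in a super-attracting cycle).
\item Koebe gives $\vol(V\cap f^{-n}B(a,r))\lesssim (r/r_0)^{2/K}\vol(V)$ for each component $V$ of $f^{-n}(B(a,r_0))$.
\item Disjointness then yields $\omega(f^{-n}B(a,r))\lesssim r^{2/K}$ uniformly in $n$.
\end{itemize}
This version also covers a repelling periodic $a\in\sP_f$. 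For example, take $a=2$ for $z^2-2$: there $2\in f^{-N}(2)$ for every $N$, so no finite pull-back avoids $\overline{\sP_f}$, and the paper's reduction step is not literally available. Without the disjointness step, which is what replaces expansion, the estimate you need is not established.
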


	Since $||dd^c\phi||_\infty+||\phi||_\infty$ is bounded by the $\sC^2$ norm of $\phi$, we get:
	
	\begin{cor}\label{cor:gf}
		Let $f$ be a geometrically finite  rational map on $\P^1$ of degree $d\geq 2$.  Assume  $a\in \P^1$  is neither an super-attracting, attracting, nor parabolic periodic point, then there exists a constant $C=C(f,a)>0$  such that  for any $\sC^2$ observable $\phi:\P^1\to \R$, 
		$$|\langle d^{-n}(f^n)^*(\delta_a)-\mu_f, \phi\rangle|\leq C d^{-n}||\phi||_{\sC^2}.$$
		
	\end{cor}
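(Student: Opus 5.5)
The corollary follows directly from Theorem \ref{thm:gf}. I only need to bound the quantity $||dd^c\phi||_\infty+||\phi||_\infty$ appearing there by a constant multiple of $||\phi||_{\sC^2}$.

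Fix $\phi\in\sC^2(\P^1,\R)$. Clearly $||\phi||_\infty\le ||\phi||_{\sC^2}$.

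For the other term, write $dd^c\phi=h\omega$ with $h$ continuous, as in the statement of Theorem \ref{thm:gf}. Cover $\P^1$ by the two standard affine charts, restricted to closed unit discs $\{|z|\le 1\}$ and $\{|w|\le1\}$. On each chart we have
$$dd^c\phi=\tfrac{1}{2\pi}\Delta\phi\, \tfrac{i}{2}dz\wedge d\bar z,\qquad \omega=\frac{c\, \tfrac{i}{2}dz\wedge d\bar z}{(1+|z|^2)^2},$$
with normalizing constant $c>0$. Hence on each chart
$$h=\frac{(1+|z|^2)^2}{2\pi c}\,\Delta\phi .$$
The factor $(1+|z|^2)^2$ is at most $4$ on the closed unit disc. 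The Laplacian $|\Delta\phi|$ is controlled by the second derivatives of $\phi$ in that chart. Since the $\sC^2$ norm on $\P^1$ is defined, up to a fixed equivalence constant, via these charts or via the spherical metric, we obtain
$$||h||_\infty\le C_0||\phi||_{\sC^2}$$
for a universal constant $C_0$.

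Combining the two bounds gives $||dd^c\phi||_\infty+||\phi||_\infty\le (C_0+1)||\phi||_{\sC^2}$. Inserting this into Theorem \ref{thm:gf}, and replacing $C$ by $C(C_0+1)$, yields the claimed estimate. The new constant still depends only on $f$ and $a$. There is no real obstacle here: the only point needing care is that the $\sC^2$ norm is taken with respect to a fixed atlas, so that the comparison constant is independent of $\phi$.
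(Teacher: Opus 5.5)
Your proposal is correct and follows the paper's approach: the paper deduces the corollary immediately from Theorem \ref{thm:gf} by noting that $||dd^c\phi||_\infty+||\phi||_\infty$ is bounded by a constant times $||\phi||_{\sC^2}$. Your chart computation of $h$ just spells out that bound.
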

	
	The proof of Theorem \ref{thm:gf} is based on the distortion estimate of the inverse branches of $f^n$ on a small ball. 
	
	\medskip

	The equidistribution speed order $O(d^{-n})$ for H\"older continuous d.s.h. observables is the best we can expect in the following sense:
	
	\begin{thm}\label{thm:speed lower bound}
		Let $f$ be a rational map on $\P^1$ of degree $d\geq 2$ and $a\in \sJ_f$. Then there exist a H\"older continuous d.s.h. observable $\phi:\P^1\to \R$, an integer $m\geq 1$, a constant $C>0$ such that for every $n\geq 1$,
		$$|\langle d^{-mn}(f^{mn})^*(\delta_a)-\mu_f, \phi\rangle|\geq  C d^{-mn}.$$
	\end{thm}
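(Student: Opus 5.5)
The idea is to pick the observable $\phi$ as a potential built from $\delta_a$ itself, so that the pairing with $d^{-mn}(f^{mn})^*\delta_a-\mu_f$ becomes an exact expression in the Green function $g$ of $f$. Let $g$ be the continuous (indeed H\"older) quasi-potential of $\mu_f$, normalized so that $dd^c g=\mu_f-\omega$ and $\langle\mu_f,g\rangle=0$. Pullback then gives the functional equation
$$g\circ f-dg=u,\qquad u \text{ smooth},\qquad dd^c u=f^*\omega-d\omega .$$
Iterating, we get $d^{-n}(f^n)^*\mu_f=\mu_f$ and, for any point $a$,
$$\langle d^{-n}(f^n)^*\delta_a,\psi\rangle=\langle \delta_a, d^{-n}(f^n)_*\psi\rangle .$$
The test function should exploit this duality. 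Take $\phi$ with $dd^c\phi=\nu-\omega$, where $\nu$ is a smooth probability measure supported in a small disc; such $\phi$ is smooth and hence H\"older d.s.h. Then $\langle d^{-n}(f^n)^*\delta_a-\mu_f,\phi\rangle$ equals $\langle \nu-\omega, G_n\rangle$, where $G_n$ is the potential of $d^{-n}(f^n)^*\delta_a-\mu_f$. Since $\mu_f$ has continuous potential, this is $d^{-n}$ times the sum over preimages $b$ of $a$ of $\int(\log d(\cdot,b)-\text{corr})\,d(\nu-\omega)$, minus a term in $g$.

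More concretely, the potential of $d^{-n}(f^n)^*\delta_a-\mu_f$ is $d^{-n}\log d(f^n(x),a)\,+$ a smooth correction, minus $g$. That is,
$$d^{-n}(f^n)^*\delta_a-\mu_f=dd^c\bigl(d^{-n}\log d(f^n(\cdot),a)-d^{-n}g_a\circ f^n\bigr)$$
up to terms of the form $dd^c(\text{explicit})$, where $g_a$ is $g$ shifted. Pairing with $\phi$ gives $\langle d^{-n}(\Phi_a\circ f^n), \nu-\omega\rangle$-type quantities with $\Phi_a(x)=\log d(x,a)-g(x)$ (suitably normalized). So the quantity is exactly
$$d^{-n}\int \Phi_a\circ f^n\, dd^c\phi=d^{-n}\Bigl(\int\Phi_a\circ f^n\,d\nu-\int\Phi_a\circ f^n\,\omega\Bigr).$$
To get a lower bound of size $d^{-mn}$, it suffices to find $\nu$ and $m$ for which $\int\Phi_a\circ f^{mn}\,(d\nu-\omega)$ stays bounded away from $0$ uniformly in $n$.

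This is where $a\in\sJ_f$ and the choice of $m$ enter. The plan is to choose $\nu$ supported near a repelling periodic point $p\in\sJ_f$ of period $m$, or alternatively to use $\nu=\mu_f$-like choices. The simplest route is the following. Since $\omega$ is $f$-smooth, $\int \Phi_a\circ f^k\,\omega$ converges as $k\to\infty$ to $\int\Phi_a\,d\mu_f$, a finite constant (this uses $a\in\sJ_f$ only mildly). Choose $\nu$ to be $\mu_f$-like near $p$ but invariant under $f^m$ locally. The obstacle is showing that the difference does not tend to zero. I would try instead to choose $\phi:=g$ itself, or $\phi:=\Phi_b$ for a point $b$. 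Then $dd^c\phi$ is $\mu_f-\omega$, which is a H\"older d.s.h. observable, and
$$\langle d^{-n}(f^n)^*\delta_a-\mu_f, g\rangle=d^{-n}\sum_{f^n(b)=a} g(b)-0 .$$
Using $g\circ f^n=d^n g+\sum_{j<n} d^{n-1-j}u\circ f^j$, summing over preimages gives an explicit expression. The main step is to show that this sum is at least $c\,d^{-mn}$ for a suitable $m$, by evaluating along periodic structure: if $a$ is periodic of period $m$ (which can be arranged by swapping $a$ for a nearby repelling cycle only if the statement allowed it; since $a$ is fixed, I would instead exploit that $\langle d^{-n}(f^n)^*\delta_a,g\rangle$ satisfies a linear recursion $x_{n+1}=d^{-1}x_n+\text{(term depending on } \langle (f^n)^*\delta_a,u\rangle)$). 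The expected main obstacle is ruling out exact cancellation, i.e. exhibiting some $\phi$ (possibly $g+\epsilon\psi$) for which the leading $d^{-n}$ coefficient is nonzero along a subsequence $mn$. I would handle this by a contradiction argument: if every H\"older d.s.h. $\phi$ had $o(d^{-n})$ error, then $\sum_{f^n(b)=a}\delta_b-d^n\mu_f\to0$ weakly in the dual of H\"older d.s.h. functions. Testing with $\phi=\log d(\cdot,c)$-truncations near $a\in\sJ_f$ (a point whose preimages accumulate) then contradicts the atomic mass $d^{-n}$ carried at each preimage.
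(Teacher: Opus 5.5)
Your reduction is fine. With $dd^c g_a=\delta_a-\mu_f$ (your $\Phi_a$ up to a constant), one has $\langle d^{-n}(f^n)^*\delta_a-\mu_f,\phi\rangle=d^{-n}\langle g_a,(f^n)_*(dd^c\phi)\rangle$. So everything hinges on finding a H\"older d.s.h. $\phi$ and an $m$ such that $\langle g_a,(f^{mn})_*(dd^c\phi)\rangle$ stays bounded away from $0$ for \emph{every} $n$. You never carry out that step, and every candidate you propose has $\omega$ as a piece of $dd^c\phi$ ($\nu-\omega$, or $\mu_f-\omega$ when $\phi=g$). The resulting term $\langle g_a,(f^n)_*\omega\rangle=\int g_a\circ f^n\,\omega$ varies with $n$ in an uncontrolled way.

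Your assertion that this term tends to $\int g_a\,d\mu_f$ is false in general. $(f^n)_*\omega$ is a push-forward, not the normalized pull-back $d^{-n}(f^n)^*\omega$. For a polynomial, $(f^n)_*\omega$ keeps a fixed positive proportion of its mass accumulating at $\infty$, which $\mu_f$ does not charge. For $\phi=g$ you get $d^{-n}\bigl(\int g_a\,d\mu_f-\int g_a\circ f^n\,\omega\bigr)$, and nothing prevents the bracket from tending to $0$.

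The closing contradiction argument does not rescue this. Negating ``$o(d^{-n})$ for every $\phi$'' only gives some $\phi$ with $\limsup_n d^n|\langle d^{-n}(f^n)^*\delta_a-\mu_f,\phi\rangle|>0$. That is a bound along an unspecified subsequence, whereas the statement requires a fixed $m$ and a bound for all $n\ge 1$. Moreover, the testing against truncations of $\log d(\cdot,c)$ is not actually done. Their H\"older norms blow up, so a per-observable $o(d^{-n})$ hypothesis cannot be applied to them uniformly.

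The missing idea is to make \emph{both} pieces of $dd^c\phi$ invariant. Take $dd^c\phi=\mu_1-\mu_2$ with $\mu_1,\mu_2$ $f^m$-invariant probability measures having H\"older continuous potentials. Then $\phi$ is H\"older, and $\langle g_a\circ f^{mn},\mu_1-\mu_2\rangle=\langle g_a,\mu_1-\mu_2\rangle$ is independent of $n$. It only remains to make this constant nonzero.

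The paper does this using $-g_a\to+\infty$ at $a$. Choose a small ball $B_1$ centred at $a$ and a ball $B_2$ meeting $\sJ_f$ with $\inf_{B_1}(-g_a)>\sup_{B_2}(-g_a)$. Let $\mu_i$ be the maximal entropy measures of expanding Cantor repellers (horseshoes) $K_i\subset B_i$ with $f^{m_i}(K_i)=K_i$, and set $m=m_1m_2$. This is where $a\in\sJ_f$ enters: it guarantees a horseshoe inside $B_1$.

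Your idea of a measure near a repelling cycle ``invariant under $f^m$ locally'' is in this spirit, but it falls short in three ways. A smooth bump is not invariant. Dirac masses on a cycle would give $\phi$ a logarithmic pole. And in any case you pair it against $\omega$ rather than against a second invariant measure.
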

	
	\medskip
	
	In Theorem \ref{thm:main} we show that the equidistribution speed  order has an  upper bound $O(nd^{-n})$. Compared to Theorem \ref{thm:speed lower bound},  it is natural to ask whether  $O(nd^{-n})$ is also a lower bound.  We propose the following question:
	
	\begin{que}\label{que:optimal}
		Let $d\geq 2$. Determine whether there exist a rational map on $\P^1$ of degree $d$,  a point $a\in \P^1$ which is not a super-attracting periodic point,  a H\"older continuous d.s.h. function $\phi:\P^1\to \R$, a constant $C>0$, and  a subsequence $n_j\to +\infty$ such that 
		$$|\langle d^{-n_j}(f^{n_j})^*(\delta_a)-\mu_f, \phi\rangle|\geq  C n_j d^{-n_j}.$$
	\end{que}

	\medskip

	Let us finally mention that the  exponential equidistribution speed of  periodic points for holomorphic endomorphisms on $\P^k,k\geq1$ was established by de Th\'elin--Dinh--Kaufmann \cite{de2024exponential}, see also Yap \cite{yap2024quantitative}, Dinh--Yap \cite{yap2025lower}, and Gauthier--Vigny \cite{gauthier2025quantitative}.
	
	\medskip

	\subsection*{Acknowledgement}
	The authors are supported by National Key R\&D Program
	of China (No.2025YFA1018300), NSFC Grant (No.12401106), and ZPNSF Grant (No.XHD24A0201).
	
	\medskip
	
	\section{Denker--Przytycki--Urba\'nski's lemma for Fatou critical points}
	Let $f$ be a rational map on $\P^1$ of degree $d\geq 2$ and denite by $\sC_f$ the critical set of $f$. The cardinality of $\sC_f$ is at most $2d-2$.  
	Let  $\sC'_f$ be the subset of $\sC_f$ removing periodic critical points and let $N$ be the cardinality of $\sC'_f$.   
	For $x\in \P^1$, $c\in\sC_f$,  we define the functions
	$$\psi_f(x):=-\log d(x,\sC'_f),$$
	$$\tilde{\psi}_f(x):=-\log d(x,\sC_f),$$
	and
	$$\psi_c(x):=-\log d(x,c).$$
	
	Since $\diam \P^1=1$,  $\psi_c(x)$,  $\psi_f(x)$ and $\tilde{\psi}_f(x)$ are  always non-negative.  The aim of this section is to prove the following result:
	\begin{prop}\label{prop:dpu}
		There is a constant $Q=Q(f)>0$ such that for any $x\in \P^1$ and $n\geq 1$, 
		$$\sum\psi_f(f^i(x))\leq Qn,$$
		where the summation is taken over all integers $0\leq i\leq n$ with at most $N$ terms removed. 
	\end{prop}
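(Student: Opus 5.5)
Since $\psi_f=\max_{c\in\sC'_f}\psi_c$ up to a bounded additive constant, it is enough to treat each non-periodic critical point $c$ separately. Concretely, I will show that for each $c\in\sC'_f$ there is $Q_c$ with
$$\sum\psi_c(f^i(x))\leq Q_c n,$$
the sum running over $0\le i\le n$ with at most one term removed. Summing over the $N$ points of $\sC'_f$ then gives the claim with at most $N$ terms removed. At each time $i$ only the nearest critical point matters, and points with $d(f^i(x),\sC'_f)\ge r_0$ contribute at most $\log(1/r_0)$ each. So fix a small $r_0>0$ and consider only the times $i$ at which $f^i(x)$ is $r_0$-close to some $c$.

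The key local fact is a distance-shadowing estimate. Near $c$ we have $d(f(y),f(c))\asymp d(y,c)^{e_c}$ with $e_c$ the local degree, and $f$ is Lipschitz. Hence if $f^i(x)$ is very close to $c$, then $f^{i+j}(x)$ stays close to $f^j(c)$ for $j$ up to roughly $\psi_c(f^i(x))/\log L$, where $L$ is a Lipschitz constant. Now suppose two returns occur at times $i<i'$, and the gap $i'-i$ is much smaller than $\psi_c(f^i(x))$. Then $f^{i'-i}(c)$ is close to $f^{i'}(x)$, which is close to $c$. Since $c$ is not periodic, I would like $d(f^k(c),c)$ to be bounded below for all $k$. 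For a single Julia or non-recurrent critical point, $\inf_{k\ge1} d(f^k(c),c)>0$ holds unless $c$ is recurrent.

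Recurrence is the main obstacle. The section title suggests the reduction is to Fatou critical points, and for a non-periodic critical point in the Fatou set the orbit eventually lands in an attracting or parabolic basin or a rotation domain, so $c$ is not recurrent except in the rotation-domain case. Julia critical points are handled by the original Denker--Przytycki--Urba\'nski lemma, which I would quote in its standard form, including recurrent Julia critical points. For Fatou critical points, I use non-recurrence: the closure of $\{f^k(c)\}_{k\ge1}$ avoids $c$ (rotation domains carry no critical points). Hence $\delta_c:=\inf_{k\geq1}d(f^k(c),c)>0$.

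With $\delta_c>0$ in hand, I would run the counting argument. Let $i_1<i_2<\dots$ be the times with $\psi_c(f^{i}(x))>M$, for $M$ large. By shadowing, if $d(f^{i_j}(x),c)=e^{-t}$, then $f^{i_j+k}(x)$ is within $L^ke^{-t}$ of $f^k(c)$. Therefore $f^{i_j+k}(x)$ stays at distance at least $\delta_c/2$ from $c$ as long as $L^k e^{-t}<\delta_c/2$, which gives $i_{j+1}-i_j\geq (t-\log(2/\delta_c))/\log L$. So every deep visit except the last is followed by a gap proportional to its depth, and thus $\psi_c(f^{i_j}(x))\le C(i_{j+1}-i_j)+C$. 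Summing telescopes to at most $Cn$, with only the final visit, which has no following gap, left uncontrolled. That is the one removed term per critical point. Shallow times contribute at most $M$ each. Combining the Fatou case with the quoted Julia case, and summing over $c\in\sC'_f$, yields $Q$.
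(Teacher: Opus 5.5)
Your argument is correct, and its overall structure matches the paper's. Julia critical points are delegated to the Denker--Przytycki--Urba\'nski lemma, and non-periodic Fatou critical points are shown to be non-recurrent via the classification of Fatou components. The per-critical-point bounds, each with one removed term, are then summed using $\psi_f=\max_{c\in\sC'_f}\psi_c\le\sum_c\psi_c$.

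The difference is in how a Fatou critical point $c$ is handled. The paper uses normality of $\{f^n\}$ near $c$ to get a whole ball $B=B(c,\delta)$ with $f^n(B)\cap B=\emptyset$ for all $n\ge 1$. So every orbit enters $B$ at most once, and after removing that visit every remaining term is at most $-\log\delta$, with no counting needed. You use only the pointwise statement $\delta_c=\inf_{k\ge1}d(f^k(c),c)>0$ and the global Lipschitz constant $L$. A visit at depth $t$ then forces a gap of at least $(t-\log(2/\delta_c))/\log L$ before the next deep visit, and you telescope these gaps.

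The paper's route is shorter and gives a cleaner conclusion (at most one visit, $Q=-\log\delta$). However, it genuinely needs the equicontinuity available in the Fatou set, and this fails at every Julia point, where iterates of any neighborhood return. Your route is essentially the standard shadowing argument for non-recurrent critical points. It applies verbatim to any non-recurrent critical point, Fatou or Julia, at the cost of a little bookkeeping.

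A minor point: the phrase ``not recurrent except in the rotation-domain case'' is misleading, since there is no exceptional case. When the orbit of $c$ falls into a rotation cycle, $\omega(c)$ is a compact subset of the interior of that cycle. Since $f$ is injective on each component of the cycle, $c$ does not lie in it, which is what your parenthetical is meant to capture.
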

	\begin{proof}
		If we replace $\sC'_f$ by $\sC_f\cap \sJ_f$ in the definition of $\psi_f$, where $\sJ_f$ is the Julia set, the same statement of Proposition \ref{prop:dpu} was proved by Denker--Przytycki--Urba\'nski \cite[Lemma 2.3]{denker1996transfer}.  
		Thus we will get the conclusion once we prove the following lemma:
		
		\begin{lem}
			Let $c\in \sC'_f$ such that $c$ is contained in the Fatou set.  There exists a constant $Q>0$ such that for any $x\in \P^1$ and $n\geq 1$,  
			$$\sum\psi_c(f^i(x))\leq Qn$$
			where the summation is taken over all integers $0\leq i\leq n$ with at most one term removed. 
		\end{lem}
		\begin{proof}
			Since $c$ is not  periodic and is contained in the Fatou set,  by classification of Fatou components, $c$ is non-recurrent in the sense that $x\notin \omega(x)$, where $\omega(x)$ is the $\omega$-limit set of $x$. 
			Since $c$ is in the Fatou set, by normality argument,  there exists $\delta>0$ such that for the ball  $B:=B(c,\delta)$, we have $f^n(B)\cap B=\emptyset$ for every $n\geq 1$.  
			Set $Q:=-\log \delta$.  By our choice of $B$,  for the orbit $\left\{x,\dots, f^{n}(x)\right\}$,  there is at most one index $k$ such that $f^k(x)\in B$.  Thus 
			$$\sum\psi_c(f^i(x))\leq Qn,$$
			where the summation is taken over all integers in $\left\{0,\dots, n\right\}\setminus \left\{k\right\}$. This finishes the proof. 
		\end{proof}
		
	\end{proof}
	
	\medskip
	
	\section{Diameter of the pull-back of a ball}
	Let $f$ be a rational map on $\P^1$ of degree $d\geq 2$.   The following result was proved by Denker--Przytycki--Urba\'nski \cite[Lemma 3.4]{denker1996transfer}. 
	
	\begin{lem}\label{lem:dpudiameter}
		Fix $M>0$, $Q>0$. Then there exist constants  $L>0$, $\rho>0$  such that  the following hold:  if $x\in \P^1$ and $n\geq 1$ satisfy 
		$$\sum\tilde{\psi}_f(f^i(x))\leq Qn,$$
		where the summation is taken over all integers $0\leq i\leq n$ with at most  $M$ terms removed,  then for any ball $B$ such that $f^n(x)\in B$, let $V$ be the  connected component  of $f^{-n}(B)$ containing $x$, we have
		$$\diam V\leq L^n(\diam B)^\rho.$$
	\end{lem}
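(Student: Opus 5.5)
We argue by backward induction along the orbit, pulling $B$ back one step at a time and controlling how much each step can enlarge the diameter. Set $x_i:=f^i(x)$ and $V_n:=B$, and let $V_i$ be the connected component of $f^{-1}(V_{i+1})$ containing $x_i$. Then $V_0=V$. Two local facts about a single rational map drive the proof.

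The first fact is a \emph{local distortion estimate}. There are constants $K>1$ and $r_0>0$, depending only on $f$, with the following property. If $W$ is a connected set with $\diam W\le r_0$, and $U$ is a component of $f^{-1}(W)$ containing a point $y$, then
$$\diam U\le K\, e^{\tilde\psi_f(y)}\diam W \quad\text{if } \diam W\ll d(y,\sC_f)^{\deg},$$
and in every case $\diam U\le K(\diam W)^{1/D}$, where $D$ is the maximal local degree.

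Near a critical point $c$ of local degree $k$, we have $f(z)-f(c)\approx (z-c)^k$. Hence a preimage of a set of size $r$ has size $\lesssim r^{1/k}$. Away from $\sC_f$, $f$ is locally biLipschitz, and the Lipschitz constant of the inverse branch at $y$ is comparable to $|f'(y)|^{-1}\lesssim d(y,\sC_f)^{-(k-1)}$. Both follow from compactness and local normal forms.

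The induction tracks $r_i:=\diam V_i$. At step $i$ we use one of two bounds:
\[
r_i\le K e^{(D-1)\tilde\psi_f(x_i)} r_{i+1}
\quad\text{or}\quad
r_i\le K r_{i+1}^{1/D}.
\]
We use the first bound whenever $r_{i+1}$ is small compared with $d(x_i,\sC_f)^D$, so that $V_{i+1}$ stays in the univalent region. Otherwise we use the second bound.

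For the indices where the first bound applies, multiplying the Lipschitz factors gives a contribution of $\exp\bigl((D-1)\sum\tilde\psi_f(x_i)\bigr)$. The sum runs over the non-removed indices and is at most $e^{(D-1)Qn}$ by hypothesis.

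The second bound is where the obstacle lies. Each use of it costs a root $r\mapsto r^{1/D}$, and we must show this happens at most a bounded number of times, or else accept a polynomially worse exponent. The naive count fails: every index might be "close to critical" relative to a tiny $r$.

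I would try the Denker--Przytycki--Urba\'nski strategy. We only need to apply the root bound at indices $i$ with $d(x_i,\sC_f)\lesssim r_{i+1}^{1/D}$, and there $\tilde\psi_f(x_i)\ge \frac1D\log(1/r_{i+1})$. We also know $r_{i+1}\ge$ the current bound, which is at most $L^n(\diam B)^{\rho}$ by the inductive hypothesis. The sum condition $\sum\tilde\psi_f\le Qn$ therefore limits the number of such indices: each one contributes about $\frac1D\log\frac1{r_{i+1}}$.

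A cleaner way to organize this is to track the quantity $\log r_i$ directly. Write $r_i\le A_i r_n^{\rho_i}$. At a bad step, $\tilde\psi_f(x_i)\ge \frac{\rho_{i+1}}{D}\log\frac1{r_n}-O(n)$. Since the total of the $\tilde\psi_f$ is at most $Qn$, the number of bad steps is at most about $DQn/\log(1/r_n)$, plus $M$ for the removed terms.

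We distinguish two cases according to the size of $\diam B$:

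1. If $\diam B\ge e^{-C n}$ for a suitable constant $C$, the conclusion is trivial: choosing $L\ge e^{C\rho}$ makes $L^n(\diam B)^\rho\ge 1\ge\diam V$.
2. If $\diam B< e^{-Cn}$, the number of bad steps is at most $M+DQ/C$, which is a constant. So $\rho=D^{-(M+DQ/C+1)}$ works, and the good steps contribute at most $(Ke^{(D-1)Q})^n$.

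The removed indices, at most $M$ of them, are always treated as bad steps. This accounts for the $M$ in the exponent of $\rho$.

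The delicate point I expect to have to check is the self-consistency of the induction. The threshold for "bad" depends on $r_{i+1}$, which is itself controlled only through the inductive bound. I would handle this by proving the bound with a fixed $\rho$ and $L$ by downward induction on $i$, using the case split above at each stage.
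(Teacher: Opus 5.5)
\textbf{Where your argument stands.} The paper does not prove this lemma; it simply quotes Denker--Przytycki--Urba\'nski \cite[Lemma 3.4]{denker1996transfer}. Your sketch would therefore be a self-contained replacement. Its skeleton is right: pull back one step at a time, with a Lipschitz factor $d(x_i,\sC_f)^{1-D}$ or a root $r\mapsto Kr^{1/D}$.

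\textbf{The gap.} The step you flag as delicate is left open, and the count you give for it is circular as written. At a bad step you only get $\tilde\psi_f(x_i)\ge \frac{\rho_{i+1}}{D}\log\frac{1}{r_n}-O(n)$, where $\rho_{i+1}=D^{-k}$ and $k$ is the number of roots already taken. So ``at most $DQn/\log(1/r_n)$ bad steps'' silently discards a factor $D^{k}$ that depends on the very quantity being counted.

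There are also two slips. What you need is the upper bound $r_{i+1}\le A_{i+1}r_n^{\rho_{i+1}}$, not $r_{i+1}\ge$. And the factor in your first display should be $e^{(D-1)\tilde\psi_f(y)}$.

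\textbf{How to close it.} Look only at the \emph{largest} non-removed bad index $i$. Above it, roots were taken only at the at most $M$ removed indices, so
$\log\frac{1}{r_{i+1}}\ge D^{-M}\log\frac{1}{r_n}-n(\log K+(D-1)Q)$.
If $\diam B<e^{-Cn}$ with $C$ large in terms of $f,M,Q$, this forces $\tilde\psi_f(x_i)>Qn$. That is impossible for a single non-removed term, since all terms are nonnegative. Hence in your Case 2 no non-removed step is bad, and $\rho=D^{-M}$ suffices. The same estimate also gives $r_{i+1}\le r_0$ at every stage, which you need for the local estimates to apply at all; you did not address this.

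\textbf{A simpler route.} The good/bad dichotomy is actually unnecessary, because the Lipschitz estimate holds unconditionally for $\diam W\le r_0$. Near a critical point of local degree $q$, set $\delta=d(y,\sC_f)$ and $r=\diam W$. Either $r\lesssim\delta^q$, and the inverse branch is univalent with derivative $\asymp\delta^{1-q}$. Or $r\gtrsim\delta^q$, and then $\diam U\lesssim r^{1/q}=r\cdot r^{1/q-1}\lesssim r\,\delta^{1-q}$. Hence $\diam U\le K\,d(y,\sC_f)^{1-D}\diam W$ always.

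Use this bound at the non-removed indices and the root bound at the removed ones. The recursions $\log r_i\le \log K+(D-1)\tilde\psi_f(x_i)+\log r_{i+1}$ and $\log r_i\le\log K+\frac{1}{D}\log r_{i+1}$ unwind directly: every additive term is nonnegative and gets multiplied by a factor at most $1$. This gives $\diam V\le (Ke^{(D-1)Q})^n(\diam B)^{D^{-M}}$. The bound is valid as soon as its right-hand side is $\le r_0$, since the partial unwindings then also bound every intermediate $r_{i+1}$ by $r_0$. Otherwise the claim is trivial with $L=Ke^{(D-1)Q}/r_0$. So you get $\rho=D^{-M}$ without counting bad steps at all.
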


	\medskip
	
	Recall that $\sS_f$ is the set of super-attracting periodic points.  We fix a $\delta>0$ such that for every $y\in \sS_f$, the ball $B(y,\delta)$ is contained in the B\"ottcher coordinate of $y$. 
	Therefore, there exists a constant $C\geq 1$ such that  if the periods of $y$ is $m\geq 1$ and the multiplicity of $f^m$ at $y$ is $l'\geq 2$,  then for any set  $B\subset B(y,\delta)$ and $n\geq 1$,  we have
	\begin{equation}\label{eqn:Bottcher}
		\diam B\leq  C(\diam f^n(B))^{l^{-n}}
	\end{equation}here we set $l:=l'^{1/m}>1$. Let $B(\sS_f,\delta)$ be the $\delta$-neighborhood of $\sS_f$, which is forward invariant, i.e. $f(B(\sS_f,\delta))\subset B(\sS_f,\delta).$
	
	Combining Proposition \ref{prop:dpu} and Lemma \ref{lem:dpudiameter} we immediately get:
	
	\begin{lem}\label{lem:diameter}
		There exist $L>0$, $\rho>0$ such that for $x\in \P^1$ and $n\geq 1$ such that $f^n(x)\notin B(\sS_f,\delta)$, and for a ball $B$ such that $f^n(x)\in B$,  let $V$ be the connected component  of $f^{-n}(B)$ containing $x$, we have
		$$\diam V\leq L^n (\diam B)^\rho.$$
	\end{lem}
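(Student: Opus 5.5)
Since the statement is advertised as an immediate combination, the plan is to verify the hypothesis of Lemma \ref{lem:dpudiameter} for every $x$ with $f^n(x)\notin B(\sS_f,\delta)$, with constants $Q,M$ depending only on $f$. That lemma concerns $\tilde\psi_f=-\log d(\cdot,\sC_f)$. Proposition \ref{prop:dpu}, however, only controls $\psi_f=-\log d(\cdot,\sC'_f)$. The difference comes from the periodic critical points, and these are exactly the super-attracting periodic points. So we write
$$\tilde\psi_f(y)\le \psi_f(y)+\sum_{c\in \sC_f\setminus\sC'_f}\psi_c(y),$$
using $\tilde\psi_f=\max_{c\in\sC_f}\psi_c$ and the nonnegativity of all the terms. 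By Proposition \ref{prop:dpu}, the first part summed over $0\le i\le n$ is at most $Qn$ after removing at most $N$ terms. It therefore suffices to bound $\sum_{i=0}^n\psi_c(f^i(x))$ for each periodic critical point $c\in\sS_f$.

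The key observation is that the orbit segment $x,\dots,f^n(x)$ never enters $B(\sS_f,\delta)$. Indeed, $B(\sS_f,\delta)$ is forward invariant, so if $f^i(x)\in B(\sS_f,\delta)$ for some $i\le n$, then $f^n(x)\in B(\sS_f,\delta)$, which contradicts the hypothesis. Hence for every $0\le i\le n$ and every $c\in\sS_f$ we have $d(f^i(x),c)\ge\delta$, so $\psi_c(f^i(x))\le -\log\delta$. Summing over the at most $2d-2$ periodic critical points and over $i$ gives
$$\sum{}'\tilde\psi_f(f^i(x))\le Qn+(2d-2)(n+1)\log\tfrac1\delta\le Q'n,$$
where the sum omits at most $N$ terms. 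Here $Q'=Q+4(d-1)\log\frac1\delta$ suffices, using $n+1\le 2n$ for $n\geq1$. We may assume $\delta<1$; if it is not, nothing changes, since then $\log\frac1\delta\le 0$ and the extra term can simply be dropped.

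We then apply Lemma \ref{lem:dpudiameter} with $M=N\le 2d-2$ and $Q'$. This yields $L,\rho$ depending only on $f$ (through $\delta$), and gives $\diam V\le L^n(\diam B)^\rho$ for the component $V$ of $f^{-n}(B)$ containing $x$.

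There is no real obstacle. The only point needing care is forward invariance of $B(\sS_f,\delta)$, which was arranged when $\delta$ was chosen inside the Böttcher coordinates. That invariance is what converts the condition on the endpoint $f^n(x)$ into a condition on the whole orbit segment.
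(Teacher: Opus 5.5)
Your proposal is correct and follows the paper's argument: forward invariance of $B(\sS_f,\delta)$ keeps the whole orbit segment at distance at least $\delta$ from the periodic critical points, Proposition \ref{prop:dpu} handles $\sC'_f$, and Lemma \ref{lem:dpudiameter} finishes. Your explicit splitting $\tilde\psi_f\le\psi_f+\sum_{c\in\sC_f\setminus\sC'_f}\psi_c$ with $Q'=Q+4(d-1)\log\frac1\delta$ spells out what the paper leaves implicit, and it gets the direction $\psi_c(f^i(x))\le-\log\delta$ right, where the paper's text has that inequality reversed.
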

	\begin{proof}
		The forward invariance of $B(\sS_f,\delta)$ implies that $f^i(x)\notin B(\sS_f,\delta)$ for every $0\leq i\leq n$.  In particular, for every periodic critical point $c$, $-\log d(f^i(x),c)\geq -\log \delta$.  
		By Proposition \ref{prop:dpu}, there is a constant $Q=Q(f)>0$ such that 
		$$\sum \tilde{\psi}_f(f^i(x))\leq Qn,$$
		where the summation is taken over all integers $0\leq i\leq n$ with at most $N$ terms removed.  Applying Lemma \ref{lem:dpudiameter} and we get the result. 
	\end{proof}
	
	\medskip
	
	\begin{prop}\label{prop:diameter}
		There exist  $L>0$, $\rho>0$ such that  for $x\in \P^1$, $n\geq 1$ and a ball $B$ such that $f^n(x)\in B$, let $V$ be the connected component  of $f^{-n}(B)$ containing $x$, we have
		$$\diam V\leq L^n(\diam B)^\frac{\rho }{-\log d(f^n(x), \sS_f)+1}.$$
	\end{prop}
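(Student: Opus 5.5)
Split according to how the forward orbit of $x$ meets the forward-invariant neighbourhood $B(\sS_f,\delta)$. If $f^n(x)\notin B(\sS_f,\delta)$, then $-\log d(f^n(x),\sS_f)+1\le 1-\log\delta$, so the exponent $\frac{\rho}{-\log d(f^n(x),\sS_f)+1}$ is bounded below by $\rho/(1-\log\delta)$. Lemma \ref{lem:diameter} gives $\diam V\le L^n(\diam B)^\rho$. Since $\diam B\le 1$, we have $(\diam B)^\rho\le (\diam B)^{\rho'}$ for any $\rho'\le\rho$, so this case follows after shrinking $\rho$.

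Now suppose $f^n(x)\in B(\sS_f,\delta)$. Let $k$ be the smallest index in $\{0,\dots,n\}$ with $f^k(x)\in B(\sS_f,\delta)$. By forward invariance, $f^i(x)\in B(\sS_f,\delta)$ for all $k\le i\le n$.

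\emph{The stretch from $k$ to $n$.} Let $V_k$ be the component of $f^{-(n-k)}(B)$ containing $f^k(x)$. If $\diam B$ is small compared with $d(f^n(x),\sS_f)$, the ball $B$ together with the relevant preimage components stays inside the Böttcher neighbourhood of the cycle. Applying \eqref{eqn:Bottcher} to the set $V_k$ then gives $\diam V_k\le C(\diam B)^{l^{-(n-k)}}$.

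\emph{Bounding $l^{-(n-k)}$.} In Böttcher coordinates $w\mapsto w^{l'}$, a point at distance $r$ from $y$ reaches distance about $r^{l^{j}}$ after $j$ steps. Since $f^k(x)$ lies at distance comparable to $\delta$ and $f^n(x)$ lies at distance $d(f^n(x),\sS_f)$, we get
$$l^{\,n-k}\lesssim \frac{-\log d(f^n(x),\sS_f)}{-\log\delta}.$$
Hence $l^{-(n-k)}\gtrsim \frac{1}{-\log d(f^n(x),\sS_f)+1}$, which is precisely the exponent in the statement.

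\emph{The stretch from $0$ to $k$.} If $k\ge1$, then $f^{k-1}(x)\notin B(\sS_f,\delta)$. Apply Lemma \ref{lem:diameter} at time $k-1$ to the ball of radius $\diam f(V_k)\lesssim \diam V_k$ around $f^{k-1}(x)$. One extra step of $f$ costs only a constant, since $f$ is Lipschitz and the component of $f^{-1}$ of a small ball has diameter at most a constant times a power of the ball's diameter. This gives
$$\diam V\le L^{k}\bigl(C(\diam B)^{l^{-(n-k)}}\bigr)^{\rho}.$$
Combined with the previous bound, this yields the claim after absorbing constants into $L$, using $L^k\le L^n$. If $k=0$, the Böttcher bound alone suffices.

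The case where $\diam B$ is not small relative to $d(f^n(x),\sS_f)$, or relative to $\delta$, has to be handled separately. Then $(\diam B)^{\text{exponent}}$ is bounded below by a positive constant, because $r^{1/(1-\log r)}\ge e^{-1}$. The trivial bound $\diam V\le 1$ then suffices after enlarging $L$.

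\textbf{Main obstacle.} The delicate step is controlling the component structure inside the Böttcher disc. One must check that the pull-back components remain in the Böttcher neighbourhood, so that \eqref{eqn:Bottcher} applies. One must also pass cleanly between the chord metric and the Böttcher coordinate to justify the estimate of $l^{n-k}$ through $-\log d(f^n(x),\sS_f)$. I would first reduce to $B$ of radius at most $c\, d(f^n(x),\sS_f)$. I would then use the explicit model $w\mapsto w^{l'}$, whose inverse images of discs are explicit, with bounded distortion to transfer to the spherical metric.
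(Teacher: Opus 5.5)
Your proof is correct and follows the same two-stage scheme as the paper. Lemma \ref{lem:diameter} handles the orbit before it enters $B(\sS_f,\delta)$, \eqref{eqn:Bottcher} handles it afterwards, and superexponential convergence bounds $l^{\,n-k}$ by a constant times $-\log d(f^n(x),\sS_f)+1$.

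The genuine difference is where you cut the orbit. The paper cuts at the first time $j$ with $f^{j}(V)\subset B(\sS_f,\delta)$ (its ``maximal $m$'' should read ``minimal $m$''). With that choice:
\begin{itemize}
\item \eqref{eqn:Bottcher} applies to the sets $f^{i}(V)$, $i\ge j$, by construction;
\item since $x\in V$, the length of the B\"ottcher stretch is automatically controlled by $d(f^n(x),\sS_f)$;
\item the only price is that Lemma \ref{lem:diameter} must be applied to an auxiliary point $w\in V$ with $f^{j-1}(w)\notin B(\sS_f,\delta)$. This is harmless, since $V$ is also the component of $f^{-n}(B)$ through $w$.
\end{itemize}
You cut at the first entry time $k$ of the orbit of $x$ alone. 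This makes the Lemma \ref{lem:diameter} step immediate. However, nothing then forces $f^{i}(V)$, $k\le i\le n$, into the B\"ottcher domain. That is exactly what creates your reduction to $\diam B\ll d(f^n(x),\sS_f)$ and the component-tracking issue you flag as the main obstacle.

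That analysis does go through. In logarithmic B\"ottcher coordinates, pulling back by $w\mapsto w^{e}$ is division by $e$. So, measured in those coordinates, the ratio of $\diam f^{i}(V)$ to the distance of $f^{i}(x)$ from the cycle never increases as $i$ decreases. Passing to the spherical metric costs only a fixed bi-Lipschitz constant, and the components stay in a slightly enlarged B\"ottcher neighbourhood. Still, the paper's choice of cutting time makes both the reduction and this analysis unnecessary.

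Two small slips.
\begin{itemize}
\item The bound on $l^{\,n-k}$ uses only $d(f^k(x),\sS_f)<\delta$. The point $f^k(x)$ need not be at distance comparable to $\delta$: it can land arbitrarily close to the cycle, e.g.\ from near a strictly preperiodic preimage of it. You do not need that comparability.
\item At the transition you should pull $V_k$ back once, to the component $V_{k-1}$ of $f^{-1}(V_k)$ through $f^{k-1}(x)$. Use $\diam V_{k-1}\le C(\diam V_k)^{1/d}$ and apply Lemma \ref{lem:diameter} to the ball of radius $\diam V_{k-1}$ about $f^{k-1}(x)$, not to a ball of radius $\diam f(V_k)$; the Lipschitz property of $f$ points the wrong way. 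This costs a factor $1/d$ in the exponent, which is absorbed into $\rho$. Alternatively, apply Lemma \ref{lem:dpudiameter} directly at time $k$ with $N+1$ terms removed.
\end{itemize}
The paper needs the same one-step pull-back, because $f^{j-1}(V)\not\subset B(\sS_f,\delta)$, and leaves it implicit.
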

	\begin{proof}
		
		Let $l>1$ be the constant in (\ref{eqn:Bottcher}) which is required to be minimal along all $y\in \sS_f$.  
		There exists a constant $c=c(f)>0$ such that  if $l^{-m}(-\log d(z, \sS_f))<c$,  then for every $w\in f^{-m}(z)$, $w\notin B(\sS_f,\delta)$.  
		Applying this for $z=f^n(w)$ for some $w\in V$,  the condition
		\begin{equation}\label{eqn:3.5}
			m> \frac{\log (-\log d(z, \sS_f))-\log c }{\log l},
		\end{equation}
		implies that $f^{n-m}(w)\notin B(\sS_f,\delta)$. 
		
		There are two cases.
		
		{\bf Case 1:}  $V\subset  B(\sS_f,\delta)$, hence $f^i(V)\subset B(\sS_f,\delta)$ for $i\geq 0$.  Then by (\ref{eqn:Bottcher}), we have 
		$$\diam V\leq C(\diam B)^{l^{-n}}.$$
		
		By (\ref{eqn:3.5}), the condition $x\in B(\sS_f,\delta)$ implies that $$n\leq \frac{\log (-\log d(z, \sS_f))-\log c }{\log l}.$$
		
		Thus $$\diam V\leq C(\diam B)^{\frac{c}{-\log d(f^n(x), \sS_f)}}, $$
		which implies the desired result. 
		\medskip
		
		{\bf Case 2:} there exists 
		$0\leq m\leq n$  such that  $f^{n-m}(V)\not\subset B(\sS_f,\delta)$.  Let $0\leq m\leq n$ be the maximal integer that satisfies this property. 
		Pick $w\in V$ such that $f^{n-m}(w)\notin B(\sS_f,\delta)$.   If $m=0$, then $f^n(V)\not\subset B(\sS_f,\delta)$, we get the desired result by Lemma \ref{lem:diameter}.   
		So we can assume $1\leq m\leq n$.  The maximality of $m$ implies that $f^{n-m+1}(w)\in B(\sS_f,\delta)$.
		
		Thus  for $z:=f^n(w)$, by (\ref{eqn:3.5}) we have 
		\begin{equation}\label{eqn:3.3}
			m-1< \frac{\log (-\log d(z, \sS_f))-\log c }{\log l}.
		\end{equation}

		Let $V_{m}$ be the connected component  of $f^{-m}(B)$ containing $f^{n-m}(w)$.  By (\ref{eqn:Bottcher}) and (\ref{eqn:3.3}),  
		\begin{equation}\label{eqn:3.2}
			\diam V_m\leq  C(\diam B)^{l^{-m}}\leq C(\diam B)^{\frac{c'}{-\log d(f^n(x), \sS_f)}},
		\end{equation}
		where $c':=c/l$.
		Apply Lemma \ref{lem:diameter} to the orbit $\left\{w,\dots f^{n-m}(w)\right\}$, we get 
		\begin{equation}\label{eqn:3.4}
			\diam V\leq \tilde{L}^{n-m}(\diam V_m)^{\tilde{\rho}},
		\end{equation}
		where $\tilde{L} $ and $\tilde{\rho}$ are constants in Lemma \ref{lem:diameter}.  
		Let $L=\tilde{L}+C^{\tilde{\rho}}$ and $\rho=c\tilde{\rho}$, combining (\ref{eqn:3.4}) with (\ref{eqn:3.2}) we get the desired result.

	\end{proof}
	
	\medskip
	
	\section{Proof of Theorem \ref{thm:main}}\label{section:3}

	Let $f$ be a rational map on $\P^1$ of degree $d\geq 2$, and let $\mu_f$ be the unique maximal entropy measure.   For any $a\in \P^1$, Let $g_a$ be the unique d.s.h.  function such that $dd^c g_a=\delta_a-\mu_f$ and $\sup_{a\in \P^1}g_a=0$.  
	Since $\mu_f$ has continuous local potential \cite[Proposition 1.18]{dinh2010dynamics}, there exists $A=A(f)>0$  such that for any  $M>0$, the set $\left\{g_a<-M\right\}$ is contained in the ball $B(a, Ae^{-M})$.  
	Define $g_{a,M}:=\max\left\{g_a, -M\right\}.$ Let $\nu_{a,M}:=dd^c g_{a,M}+\mu_f$, which is a probability measure supported in $B(a, Ae^{-M})$.
	
	The following lemma is classical.
	\begin{lem}\label{lem:bdd}
		Let $\nu$ be a probability measure on $\P^1$ with bounded local potential,  $dd^c g=\nu-\mu_f.$  Then for any continuous d.s.h.  observable $\phi$ and $n\geq 1$, we have 
		$$|\langle d^{-n}(f^n)^*(\nu)-\mu_f, \phi\rangle|\leq d^{-n}  ||g||_\infty ||dd^c\phi||.$$
		
	\end{lem}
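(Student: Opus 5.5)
The plan is to use the invariance of $\mu_f$ and to move $dd^c$ from the measure onto the observable. Since $f^*$ acts on measures and $d^{-1}f^*\mu_f=\mu_f$, I first want an identity of the form
$$d^{-n}(f^n)^*\nu-\mu_f = d^{-n}(f^n)^*(\nu-\mu_f)=d^{-n}(f^n)^*dd^c g = dd^c\big(d^{-n} g\circ f^n\big).$$
This uses that pull-back of measures commutes with $dd^c$, i.e. $(f^n)^*dd^c g=dd^c(g\circ f^n)$. For $g$ a difference of quasi-subharmonic functions this is standard: locally $g\circ f^n$ is again a difference of quasi-subharmonic functions, and for a potential $u$ of a measure $\nu$, $u\circ f^n$ is a potential of $(f^n)^*\nu$. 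I will justify it via the definition of $f^*$ on positive measures through local potentials; equivalently, via the push-forward identity $(f^n)_*dd^c\phi = dd^c((f^n)_*\phi)$ after dualizing.

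Next I pair with $\phi$ and integrate by parts. This gives
$$\langle d^{-n}(f^n)^*\nu-\mu_f,\phi\rangle=\langle dd^c(d^{-n}g\circ f^n),\phi\rangle = d^{-n}\langle g\circ f^n, dd^c\phi\rangle .$$
Writing $dd^c\phi=\mu^+-\mu^-$ with $\|\mu^+\|+\|\mu^-\|=\|dd^c\phi\|$, and using $|g\circ f^n|\le \|g\|_\infty$ everywhere, I get the bound
$$d^{-n}\|g\|_\infty(\|\mu^+\|+\|\mu^-\|)=d^{-n}\|g\|_\infty\|dd^c\phi\|.$$
Then I take the infimum over all such decompositions $\mu^\pm$.

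The main obstacle is justifying the integration by parts $\langle dd^c u,\phi\rangle=\langle u,dd^c\phi\rangle$ when both $u=g\circ f^n$ and $\phi$ are only d.s.h., with $u$ bounded and $\phi$ continuous. I would first regularize $\phi$ by smooth d.s.h. functions $\phi_j$ converging uniformly to $\phi$ with $\|dd^c\phi_j\|\le\|dd^c\phi\|$, for instance by convolution with the $\mathrm{PGL}_2$ action on $\P^1$. For smooth $\phi_j$ the identity is just the definition of $dd^c u$ as a current. Uniform convergence handles the left side, since the measures involved have finite mass.

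For the right side, note that $g\circ f^n$ is bounded. If $\phi$ is continuous the measures $\mu^\pm$ put no mass on polar sets, so the ambiguity of $g$ on polar sets is harmless. I then need $\langle g\circ f^n, dd^c\phi_j\rangle\to\langle g\circ f^n,dd^c\phi\rangle$. To get this I would instead regularize $g$ by decreasing sequences of smooth quasi-subharmonic functions, applied to each of its two quasi-subharmonic parts, and use monotone convergence against the positive measures $\mu^\pm$. The bound needed is only an inequality, so one can also argue with $\limsup$ and approximation of $g$, which suffices.
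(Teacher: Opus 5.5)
Your proposal is correct and follows the paper's proof: use the invariance $d^{-1}f^*\mu_f=\mu_f$ to write $d^{-n}(f^n)^*\nu-\mu_f=d^{-n}dd^c(g\circ f^n)$, move $dd^c$ onto $\phi$, and bound by $d^{-n}\|g\|_\infty\|dd^c\phi\|$. The regularization argument for the integration by parts is justification the paper omits; regularizing $\phi$ alone with $\phi_j\to\phi$ uniformly and $\|dd^c\phi_j\|\le\|dd^c\phi\|$ already gives the inequality, so you do not need to regularize $g$ as well.
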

	\begin{proof}
		We have $d^{-n}(f^n)^*(\nu)-\mu_f=d^{-n} dd^c(g\circ f^n).$ Hence
		\begin{align*}
			|\langle d^{-n}(f^n)^*(\nu)-\mu_f, \phi\rangle|&=|\langle d^{-n} dd^c(g\circ f^n), \phi\rangle|
			\\&=|\langle  d^{-n} g\circ f^n, dd^c \phi\rangle|
			\\&\leq d^{-n} ||g||_\infty ||dd^c\phi|| .
		\end{align*}
	\end{proof}
	
	\proof[Proof of Theorem \ref{thm:main}]
	For each $n\geq 1$, we determine a number  $M_n$ such that  for the ball $B:=B(a, r_n)$, $r_n:=Ae^{-M_n}$, for any connected component $V$ of $f^{-n}(B)$, we have $\diam V\leq d^{-\frac{n}{\alpha}}$, where $0<\alpha\leq 1$ is the H\"older exponent of the observable $\phi$.
	By Proposition \ref{prop:diameter}, it suffices to choose $M_n$ such that 
	$$L^n r_n^\frac{\rho }{-\log d(a, \sS_f)+1}\leq d^{-\frac{n}{\alpha}}.$$
	
	A direct computation shows that we can choose $$M_n:=\beta(-\log d(a,\sS_f)+1)n+\log A,$$ whrere $\beta:=\frac{\log L+\log d-\log \alpha}{\rho}$, where $L,\rho$ are constants in Proposition \ref{prop:diameter}.
	
	Since the observable $\phi$ is $\alpha$-H\"older, 
	\begin{align}\label{eqn:4.1}
		|\langle d^{-n}(f^n)^*(\delta_a)-d^{-n}(f^n)^*(\nu_{a,M_n}), \phi\rangle|&\leq d^{-n} ||\phi||_\alpha \sum_{V} (\diam V)^\alpha  \notag
		\\&\leq d^{-n} ||\phi||_\alpha,
	\end{align}
	where the summation $\sum_{V} \diam V$ is taken over all connected components of $f^{-n}(B(a,r_n))$.  
	The first inequality holds since  both $(f^n)^*(\delta_a)$ and $(f^n)^*(\nu_{a,M_n})$ can be written as a summation of probability measures supported on  connected components  $V$ of $f^{-n}(B(a,r_n))$. 
	The second inequality holds since by our choice of $r_n$ we have $\diam V\leq d^{-\frac{n}{\alpha}}$ and there are at most $d^n$ terms in the summation.
	
	Recall $||g_{a,M_n}||_\infty =M_n$ and $dd^c g_{a,M_n}=\nu_{a,M_n}-\mu_f$.  By Lemma \ref{lem:bdd} we have for any continuous d.s.h. observable $\phi$, 
	\begin{align}\label{eqn:4.2}
		|\langle d^{-n}(f^n)^*(\nu_{a,M_n})-\mu_f, \phi\rangle|\leq d^{-n}||g_{a,M_n}||_\infty ||dd^c\phi||.
	\end{align}
	
	Combine (\ref{eqn:4.1}) and (\ref{eqn:4.2}),    for any H\"older continuous d.s.h. observable $\phi$ we have: 
	\begin{align*}
		|\langle d^{-n}(f^n)^*(\delta_a)-\mu_f, \phi\rangle|&\leq d^{-n}||\phi||_\alpha +M_nd^{-n}||dd^c\phi||)
		\\&\leq M_n d^{-n}(||\phi||_\alpha+||dd^c\phi||).
	\end{align*}
	
	By the definition of $M_n$ we get 
	$$|\langle d^{-n}(f^n)^*(\delta_a)-\mu_f, \phi\rangle|\leq C\left[1+\log\frac{1}{d(a, \sS_f)}\right] nd^{-n}(||\phi||_\alpha +||dd^c\phi||),$$
	by setting $C:=\beta+ \log A$. 
	\endproof
	
	\medskip
	
	\section{Proof of Theorem \ref{thm:gf}}
	
	Let $f$ be a geometrically finite rational map on $\P^1$ of degree $d\geq 2$.   Let $\sP(f):= \cup_{n\geq 1} f^n(\sC_f)$ be the post-critical set. 
	If $y$ is a $\omega$-limit point of $\sP(f)$, then $y$ is a super-attracting, attracting,  or parabolic periodic point. 
	
	
	\proof[Proof of Theorem \ref{thm:gf}]
	
	We first assume that $a\notin \overline{\sP_f}$.  Then there exists a ball $B'$ centered at $a$ such that for every $n\geq 1$ and every connected component $V$ of $f^{-n}(B')$, $f^n:V\to B'$ is injective. 
	Define $B:=\frac{1}{2}B'$.  By the Koebe distortion theorem, there is a uniform constant $\la>1$ such that  for every $n\geq 1$ and every connected component $V$ of $f^{-n}(B)$, for every $z,w\in V$
	\begin{equation}\label{eqn:koebe}
		\frac{1}{\la}\leq \frac{|df^n(z)|}{|df^n(w)|}\leq \la.
	\end{equation}

	Pick $M>0$ sufficiently large such that the support of the probability measure $\nu_{a,M}:=dd^c g_{a,M}+\mu_f$  is contained in $B$ (see the begining of Section 3 for the definition of $g_{a,M}$).   
	Both $(f^n)^*(\delta_a)$ and $(f^n)^*(\nu_{a,M})$ can be written as a summation of probability measures supported on  connected components  $V$ of $f^{-n}(B)$.  
	Fix one connected component $V$, and let $\nu$ be the branch of   $(f^n)^*(\nu_{a,M})$  on $V$ and let $\delta_y$ be the branch of  $(f^n)^*(\delta_a)$ on $V$. We have $f^n(y)=a$.  We need the following lemma:
	
	\begin{lem}\label{lem:distortion}
		There exists a  constant $\alpha=\alpha(f,a)>0$ such that for any $\sC^2$ observable $\phi:\P^1\to \R$,  
		$$|\langle \nu-\delta_y,  \phi\rangle|\leq\alpha  ||dd^c\phi||_\infty \vol (V).$$
	\end{lem}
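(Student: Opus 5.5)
Let $h:=(f^n|_V)^{-1}:B\to V$ be the univalent inverse branch, which exists by the choice of $B'$. Then $\nu=h_*\nu_{a,M}$ and $\delta_y=h_*\delta_a$, so
$$\langle \nu-\delta_y,\phi\rangle=\langle \nu_{a,M}-\delta_a,\phi\circ h\rangle .$$
On $B$ we have $dd^c g_{a,M}=\nu_{a,M}-\mu_f$ and $dd^c g_a=\delta_a-\mu_f$. Hence $\nu_{a,M}-\delta_a=dd^c(g_{a,M}-g_a)$. The function $u:=g_{a,M}-g_a=\max\{0,-M-g_a\}$ is nonnegative and supported in $\{g_a<-M\}\subset B(a,Ae^{-M})$, which is compactly contained in $B$ by the choice of $M$. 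It depends only on $f,a$, and it is integrable, since $g_a$ has a logarithmic pole. Since $u$ has compact support in $B$, integrating by parts on $B$ gives
$$\langle \nu-\delta_y,\phi\rangle=\int_B u\, dd^c(\phi\circ h).$$
This integration by parts can be justified by approximating $u$ by smooth functions in $L^1$. No boundary terms appear because of the compact support.

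Next I compute $dd^c(\phi\circ h)$. Since $h$ is holomorphic, $dd^c(\phi\circ h)=h^*(dd^c\phi)=(h\text{-density})\,h^*\omega$. In absolute value this is at most $\|dd^c\phi\|_\infty\, h^*\omega$, and $h^*\omega=|h'|^2_{\rm sph}\,\omega$ in the spherical metric. Therefore
$$|\langle\nu-\delta_y,\phi\rangle|\le \|dd^c\phi\|_\infty\int_{B(a,Ae^{-M})} u\,|h'|^2\omega .$$

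The main step is to bound $|h'|^2$ on the support of $u$ by a constant times $\vol(V)/\vol(B)$. This uses Koebe distortion (\ref{eqn:koebe}). The spherical derivative of $h$ at $z$ is $1/|df^n(h(z))|$, and (\ref{eqn:koebe}) says these values are comparable up to the factor $\la$ for all points of $V$. Consequently, for $z\in B$,
$$|h'(z)|^2\le \la^2\frac{\int_B|h'|^2\omega}{\vol(B)}=\la^2\frac{\vol(V)}{\vol(B)} .$$
Plugging this in yields
$$|\langle\nu-\delta_y,\phi\rangle|\le \alpha\,\|dd^c\phi\|_\infty\vol(V),\qquad \alpha:=\la^2\,\|u\|_{L^1(\omega)}/\vol(B).$$
The constant $\alpha$ depends only on $f$ and $a$ through $M$, $B$ and $\la$.

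The subtle point is a possible mismatch between the spherical and Euclidean metrics in (\ref{eqn:koebe}). Because $B$ is a small ball, spherical and Euclidean derivatives are uniformly comparable there, and the same holds on $V$ after a Möbius normalization, so this only changes $\alpha$ by a harmless factor. To be safe, I would state (\ref{eqn:koebe}) for spherical derivatives of $h$ on $B$ directly, using Koebe for the inverse branch defined on $B'\supset 2B$.
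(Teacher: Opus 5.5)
Your proposal is correct and is essentially the paper's argument: the paper likewise transfers the pairing to $B$ via the univalent branch $F=f^n|_V$ and writes $\nu_{a,M}-\delta_a=dd^c(g_{a,M}-g_a)$; it then moves $dd^c$ onto $F_*\phi=\phi\circ F^{-1}$, bounds $F_*(dd^c\phi)$ by $\|dd^c\phi\|_\infty F_*\omega$, and uses (\ref{eqn:koebe}) to control $\sup_V |dF|^{-2}$ by a constant times $\vol(V)/\vol(B)$. Your write-up is slightly tighter, since you use $u\geq 0$ explicitly and avoid the paper's superfluous factor $2$, and the paper also takes for granted the spherical-versus-Euclidean issue in (\ref{eqn:koebe}) that you flag.
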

	
	\begin{proof}
		Let $F:=f^n|_V$ be the restriction of $f^n$ on $V$.  Recall  $dd^c \phi= h\omega$, where $\omega$ is the Fubini--Study form, $h$ is continuous,  and $||dd^c \phi||_\infty:=||h||_\infty$.  We have
		\begin{align*}
			|\langle \nu-\delta_y,  \phi\rangle|&=|\langle \nu_{a,M}-\delta_a,  F_*\phi\rangle| 
			\\&=|\langle dd^c(g_{a,M}-g_a),  F_*\phi\rangle|
			\\&=|\langle g_{a,M}-g_a,  dd^c(F_*\phi)\rangle|
			\\&=|\langle g_{a,M}-g_a,  F_*(dd^c \phi)\rangle|
			\\&=|\langle g_{a,M}-g_a,  F_*(h\omega )\rangle|
			\\&\leq 2||h||_\infty  \langle g_{a,M}-g_a,   F_*(\omega)\rangle
			\\&\leq2 ||h||_\infty  \langle g_{a,M}-g_a,  \omega\rangle    \sup_{z\in V} \frac{1}{|dF(z)|^2}
			\\&\leq \alpha  ||h||_\infty \vol(V),
		\end{align*} 
		where $\alpha=\alpha(f,a)>0$ is a constant. 
		Let us explain why the last inequality holds. By (\ref{eqn:koebe}),  we have:   $$\vol(V)\geq  \eta \vol(B) \sup_{z\in V} \frac{1}{|dF(z)|^2}, $$ where $\eta>0$ is a uniform constant depending only on the metric on $\P^1$. 
		The integral  $\langle g_{a,M}-g_a,  \omega\rangle $ is also  bounded since $g_{a,M}-g_a$ is a difference of  d.s.h.  functions, which are Lebesgue integrable. 
	\end{proof}
	By Lemma \ref{lem:distortion}, for any $\sC^2$ observable $\phi$, 
	\begin{align}\label{eqn:5.2}
		|\langle d^{-n}(f^n)^*(\delta_a)-d^{-n}(f^n)^*(\nu_{a,M_n}), \phi\rangle|&=d^{-n}  \sum_{V} 	|\langle \nu-\delta_y,  \phi\rangle| \notag
		\\&\leq  \sum_{V}  \alpha  ||dd^c\phi||_\infty \vol(V) d^{-n} \notag 
		\\&\leq \beta ||dd^c\phi||_\infty d^{-n},
	\end{align}
	where the summation $\sum_{V} \diam V$ is taken over all connected components of $f^{-n}(B)$, and we set $\beta:=\alpha  \vol (\P^1)$. The last inequality holds since all the connected components of $f^{-n}(B)$ are disjoint, hence  $\sum_V \vol(V)\leq \vol(\P^1)$.

	By Lemma \ref{lem:bdd} we also have
	\begin{align}\label{eqn:5.3}
		|\langle d^{-n}(f^n)^*(\nu_{a,M})-\mu_f, \phi\rangle|\leq d^{-n}||g_{a,M}||_\infty ||dd^c\phi||\leq  M||dd^c\phi||_\infty  d^{-n}.
	\end{align}
	
	Combine (\ref{eqn:5.2}) and (\ref{eqn:5.3})  we have  for any $\sC^2$ observable $\phi$, 
	\begin{align*}
		|\langle d^{-n}(f^n)^*(\delta_a)-\mu_f, \phi\rangle|&\leq (M+\beta) d^{-n} ||dd^c\phi||_\infty,
	\end{align*}
	hence Theorem \ref{thm:gf} holds when $a\notin \overline{\sP_f}$.  
	
	\medskip
	
	Finally we consider the case $a\in \overline{\sP_f}$.  By our assumption that  $a$  is neither an super-attracting, attracting, nor parabolic periodic point, $a$  is not a $\omega$-limit point of $\sP(f)$. 
	Thus there exists an inter $N=N(f,a)>0$ such that if we let $y_1,\dots, y_{d^N}$ be all the points in $f^{-N}(a)$, counted with multiplicity,  then each $y_i\notin \overline{\sP_f}$. 
	Since Theorem \ref{thm:gf} holds when $y\notin \overline{\sP_f}$, for $n\geq N+1$, and for any $\sC^2$ observable $\phi$,  there is a constant $C=C(f,a)>0$ such that 
	\begin{align*}
		|\langle d^{-n}(f^n)^*(\delta_a)-\mu_f, \phi\rangle|&=\left|\Bigg\langle d^{-n}\left(\sum_{i=1}^{d^N}(f^{n-N})^*(\delta_{y_i})\right)-\mu_f, \phi\Bigg\rangle\right|
		\\&\leq C d^{-n} ||dd^c\phi||_\infty.
	\end{align*}
	
	For $1\leq n\leq N$, we have the trivial estimate
	\begin{align*}
		|\langle d^{-n}(f^n)^*(\delta_a)-\mu_f, \phi\rangle|\leq 2||\phi||_\infty.
	\end{align*}
	
	Thus  for $n\geq 1$, for any $\sC^2$ observable $\phi$,  there is a constant $C=C(f,a)>0$ such that 
	\begin{align*}
		|\langle d^{-n}(f^n)^*(\delta_a)-\mu_f, \phi\rangle|\leq C d^{-n} (||dd^c\phi||_\infty+||\phi||_\infty).
	\end{align*}

	Hence for $a\in \overline{\sP_f}$, the conclusion of Theorem \ref{thm:gf} still holds. This finishes the proof.

	\endproof
	
	\medskip
	
	\section{Proof of Theorem \ref{thm:speed lower bound}}
	
	\proof[Proof of Theorem \ref{thm:speed lower bound}]
	
	Recall that $g_a$ is  the unique d.s.h.  function such that $dd^c g_a=\delta_a-\mu_f$ and $\sup_{a\in \P^1}g_a=0$.  Then $-g_a$ is a positive function and $-g_a(x)\to+\infty$ when $x\to a$.  Let us construct  the  H\"older continuous d.s.h. function $\phi$ in Theorem \ref{thm:speed lower bound}.  Let $B_1, B_2$ be two  small balls such that $B_1$ is centered at $a$, $B_2\cap \sJ_f\neq \emptyset$,  moreover we ask
	$$A_1:=\inf_{x\in B_1}-g_a(z)>\sup_{x\in B_2}-g_a(x):=A_2.$$

	By the horseshoe construction, see for example \cite[Example 7.3]{ji2023homoclinic}, there exists two Cantor  expanding repellers $K_1\subset B_1$ and $K_2\subset B_2$, such that $f^{m_1}(K_1)=K_1$, $f^{m_2}(K_2)=K_2$, $m_i\geq 1$ are integers.  Both $f^{m_1}:K_1\to K_1$ and $f^{m_2}:K_2\to K_2$ have positive topological entropy, and their unique maximal entropy measure $\mu_i$, $i=1,2$, have H\"older continuous local potential.  Let $\phi$ be a d.s.h.  function such that $dd^c\phi=\mu_1-\mu_2$.  Then $\phi$ is H\"older continuous.   Set $m:=m_1m_2$.  Then we have
	\begin{align*}
		|\langle d^{-mn}(f^{mn})^*(\delta_a)-\mu_f, \phi\rangle|&=d^{-mn}|\langle dd^c (g_a\circ f^{mn}), \phi\rangle|
		\\&=d^{-mn}|\langle g_a\circ f^{mn}, dd^c \phi\rangle|
		\\&=d^{-mn}|\langle g_a\circ f^{mn}, \mu_1-\mu_2\rangle|
		\\&=d^{-mn}|\langle g_a,  \mu_1-\mu_2\rangle|
		\\&\geq d^{-mn}(A_1-A_2).
	\end{align*}
	
	Here  $\langle g_a\circ f^{mn}, \mu_1-\mu_2\rangle=\langle g_a,  \mu_1-\mu_2\rangle$ holds since both  $\mu_1$ and $\mu_2$ are $f^m$-invariant measures.  Since $A_1-A_2>0$, we finish the proof.
	\endproof

	

\end{document}